\title{\vspace{-1cm}Note on Lagrangian-Eulerian Methods for Uniqueness in Hydrodynamic Systems}
\date{\today}
\newtheorem{theorem}{Theorem}
\newtheorem{lemma}{Lemma}
\newcommand{\pa}{\partial}
\newcommand{\la}{\label}
\newcommand{\fr}{\frac}
\newcommand{\na}{\nabla}
\newcommand{\be}{\begin{equation}}
\newcommand{\ee}{\end{equation}}
\newcommand{\ba}{\begin{array}{l}}
\newcommand{\ea}{\end{array}}
\newcommand{\Rr}{{\mathbb R}}
\newcommand{\beg}{\begin}
\newcommand{\norm}[1]{\left\lVert#1\right\rVert}
\author[1]{Peter Constantin \thanks{const@math.princeton.edu}}
\author[1]{	Joonhyun La \thanks{joonhyun@math.princeton.edu}}
\affil[1] {Department of mathematics, Princeton University}
\begin{document}
%version 11/30/18
\maketitle
\abstract{We discuss the Lagrangian-Eulerian framework for hydrodynamic models and provide a proof of Lipschitz dependence of solutions on initial data in path space. The paper presents a corrected version of the result in \cite{c1}.}

\section{Introduction}
Many hydrodynamical systems consist of evolution equations for fluid velocities forced by external stresses, coupled to evolution equations for the external stresses. In the simplest cases, the Eulerian velocity $u$ can be recovered from the stresses $\sigma$ via a linear operator
\begin{equation}
u = \mathbb{U} (\sigma)
\end{equation}
and the stress matrix $\sigma$ obeys a transport and stretching equation of the form
\[
\pa_t \sigma + u\cdot\na\sigma = F(\na u, \sigma),
\] 
where $F$ is a nonlinear coupling depending on the model. The Eulerian velocity gradient is obtained in terms of the operator
\begin{equation}
\nabla_x u = \mathbb{G} (\sigma),
\end{equation}
and, in many cases, $\mathbb G$ is bounded in H\"{o}lder spaces of low regularity. Then, passing to Lagrangian variables, 
\[
\tau = \sigma \circ X
\]
where $X$ is the particle path transformation $X(\cdot, t) : \mathbb{R}^d \rightarrow \mathbb{R}^d $, a volume preserving diffeomorphism, the system becomes
\begin{equation}
\left \{
\begin{gathered}
\partial_t X = \mathcal{U} (X, \tau), \\
\partial_t \tau = \mathcal{T} (X, \tau).
\end{gathered}
\right.
\end{equation}
with
\begin{equation}
\begin{gathered}
\mathcal{U} (X, \tau) = \mathbb{U} (\tau \circ X^{-1} ) \circ X, \\
\mathcal{T} (X, \tau) = F(\mathbb{G}(\tau \circ X^{-1} ) \circ X, \tau).
\end{gathered}
\la{ugexp}
\end{equation}
In particular, $\tau$ solves an ODE
\begin{equation}
\frac{d}{dt} \tau = F(g, \tau)
\end{equation}
where $g = \nabla_x u \circ X$ is of the same order of magnitude as $\tau$ in appropriate spaces, and so the size of $\tau$ is readily estimated from the information provided by the ODE model, analysis of $\mathbb G$ and of the operation of composition with $X$. The main additional observation that leads to Lipschitz dependence in path space is that derivatives with respect to parameters of expressions of the type encountered in the Lagrangian evolution (\ref{ugexp}),
\[
   \mathbb{U} (\tau \circ X^{-1} ) \circ X, \quad \mathbb{G}(\tau \circ X^{-1} ) \circ X,
\]
introduce commutators, and these are well behaved in spaces of relatively low regularity. The Lagrangian-Eulerian method of \cite{MR3660694} formalized these considerations leading to uniqueness and Lipschitz dependence on initial data in path space, with application to several examples including incompressible  2D and 3D Euler equations, the surface quasi-geostrophic equation (SQG), the incompressible porous medium equation, the incompressible Boussinesq system, and the Oldroyd-B system coupled with the steady Stokes system. In all these examples the operators $\mathbb U$ and $\mathbb G$ are time-independent. 

The paper \cite{c1} considered time-dependent cases.  
When the operators $\mathbb U$ and $\mathbb G$ are time-dependent, in contrast to the time-independent cases studied in \cite{MR3660694},  $\mathbb G$ is not necessarily bounded in $L^{\infty}(0,T; C^{\alpha})$. This was addressed in \cite{c1} by using a H\"{o}lder continuity $\sigma\in C^{\beta}(0,T; C^{\alpha})$. While this treated the Eulerian issue, it was tacitly used but never explicitly stated in \cite{c1} that this kind of H\"{o}lder continuity is transferred to $\sigma$ from $\tau$ by composition with a smooth time-depending diffeomorphism close to the identity. This is false. In fact, we can easily give examples of $C^{\alpha}$ functions $\tau$ which are time-independent (hence analytic in time with values in $C^{\alpha}$) and diffeomorphisms $X(t)(a) = a +vt$ with constant $v$, such that $\sigma = \tau\circ X^{-1}$ is not continuous in $C^{\alpha}$ as a function of time.
In this paper we present a correct version of the results in \cite{c1}. Instead of relying on the time regularity of $\tau$ alone, we also use the fact that 
$\mathbb G$ is composed from a time-independent bounded operator and an operator whose kernel is smooth and rapidly decaying in space. Then the time singularity is resolved by using the Lipschitz dependence in $L^1$ of Schwartz functions composed with smoothly varying diffeomorphisms near the identity.

A typical example of the systems we can treat is the Oldroyd-B system coupled with Navier-Stokes equations:
\begin{equation}
\left \{
\begin{gathered}
 \partial_t u  - \nu \Delta u = \mathbb{H} \left ( \mathrm{div} \,\,(\sigma - u\otimes u) \right ), \\ \nabla \cdot u = 0, \\ \partial_t \sigma + u \cdot \nabla \sigma = (\nabla u) \sigma + \sigma (\nabla u)^T -2k\sigma + 2\rho K ((\nabla u) + (\nabla u)^T), \\ u(x,0) = u_0 (x), \sigma(x,0) = \sigma_0 (x).  \end{gathered} \right. \label{sys}
\end{equation}
Here $ (x, t) \in \mathbb{R}^d \times [0, T)$.  The Leray-Hodge projector  $\mathbb{H} = \mathbb{I} + R \otimes R$ is given in terms of the Riesz transforms $R = (R_1,\dots, R_d)$,  and $\nu, \rho K, k$ are fixed positive constants. This system is viscoelastic, and  the behavior of the solution depends on the history of its deformation. \newline
The non-resistive MHD system 
\begin{equation}
\left \{
\begin{gathered}
 \partial_t u  - \nu \Delta u = \mathbb{H} \left ( \mathrm{div} \,\,(b\otimes b - u\otimes u) \right ), \\ \nabla \cdot u = 0, \\\nabla\cdot b = 0,\\ \partial_t b + u \cdot \nabla b = (\nabla u) b, \\ u(x,0) = u_0 (x), b(x,0) = b_0 (x).  \end{gathered} \right. \label{sysb}
\end{equation}
can also be treated by this method. The systems (\ref{sys}) and (\ref{sysb}) have been studied extensively, and a review of the literature is beyond the scope of this paper.

%%%% Section 1 %%%%
%%%% Section 2 %%%%

\section{The Lagrangian-Eulerian formulation} \label{Formulation}
We show calculations for (\ref{sys}) in order to be explicit, and because the calculations for (\ref{sysb}) are entirely similar.
The solution map for $u(x,t)$ of (\ref{sys}) is
\begin{equation}
\begin{gathered}
u(x,t) = \mathbb{L}_\nu (u_0) (x, t) + \int_0 ^t  g_{\nu(t-s)} * \left ( \mathbb{H} \left ( \mathrm{div} \,\, (\sigma - u \otimes u) \right ) \right )  (x,s) ds .
\end{gathered} 
\end{equation}
where 
\begin{equation}
\mathbb{L}_\nu (u_0 ) (x, t) = g_{\nu t} * u_0 (x) = \int_{\mathbb{R}^d} \frac{1}{(4\pi \nu t) ^{\frac{d}{2}} } e^{-\frac{|x-y|^2}{4\nu t}} u_0 (y) dy.
\end{equation}
Thoroughout the paper we use
\[
g_{\nu t}(x) = \frac{1}{(4\pi \nu t) ^{\frac{d}{2}} } e^{-\frac{|x|^2}{4\nu t}}.
\]
The velocity gradient satisfies
\begin{equation}
(\nabla u) (x,t) = \mathbb{L}_\nu (\nabla u_0) (x, t) + \int_0 ^t \left ( g_{\nu (t-s)} * \left ( \mathbb{H}  \nabla \mathrm{div} \,\, (\sigma - u \otimes u)  \right ) \right ) (x,s) ds.
\end{equation}
We denote the Eulerian velocity and gradient operators 
\begin{equation}
\left \{
\begin{gathered}
\mathbb{U} (f) (x,t) = \int_0 ^t ( g_{\nu(t-s)} * \mathbb{H} \mathrm{div} \,\, f ) (x,s) ds, \\
\mathbb{G} (f) (x,t) = \int_0 ^t ( g_{\nu(t-s)} * \mathbb{H} \nabla \mathrm{div} \,\, f ) (x,s) ds.
\end{gathered} \right.
\end{equation}
Note that for a second order tensor $f$, $\mathbb{G} (f) = \nabla_x \mathbb{U} (f) =  R \otimes R \left ( \mathbb{U} (\nabla_x f) \right )$. Let $X$ be the Lagrangian path diffeomorphism, $v$ the Lagrangian velocity, and $\tau $ the Lagrangian added stress, 
\begin{equation}
\begin{gathered}
v = \frac{\partial X}{\partial t} = u \circ X, \\
\tau = \sigma \circ X. \label{lagVar}
\end{gathered}
\end{equation}
We also set
\begin{equation}
\begin{gathered}
g (a,t) = (\nabla u) (X(a,t), t) =   \mathbb{L}_\nu (\nabla u_0) \circ X (a,t) \\ + \mathbb{G} \left (\tau \circ X^{-1} \right ) \circ X (a,t) - \mathbb{U} \left (\nabla_x \left ( ( v \otimes v ) \circ X^{-1} \right ) \right ) \circ X  (a,t).
\end{gathered}
\label{gat}
\end{equation}
In Lagrangian variables the system is
\begin{equation}
\left \{ 
\begin{gathered}
X(a,t) = a + \int_0 ^t \mathcal{V} (X,\tau, a,s) ds, \\
\tau(a,t) = \sigma_0 (a) + \int_0 ^t \mathcal{T} (X, \tau,a,s) ds , \\
v(a,t) = \mathcal{V} (X, \tau, t)
\end{gathered} \right. \label{fixedpt}
\end{equation}
where the Lagrangian nonlinearities $\mathcal{V}, \mathcal{T}$ are
%\begin{equation} \left \{ \begin{gathered} \mathcal{V}(X,\tau,a,t) = [(g_t * u_0) \circ X] (a,t) \\ + \int_0 ^t  \lbrace g_{t-s} * [(\mathbb{I} + R \otimes R) \mathrm{div} \,\, ( \tau \circ A - v\otimes v \circ A) ] \rbrace (a,s) ds, \\ \mathcal{T} (X, \tau, a,t) =  \end{gathered} \right\end{equation}
\begin{equation}
\left \{ \begin{gathered}
\mathcal{V} (X, \tau, a,s) =  \mathbb{L}_\nu ( u_0) \circ X  (a,s) +  (\mathbb{U} \left (\left ( \tau - v \otimes v) \circ X^{-1} \right ) \right ) \circ X  (a,s), \\ \mathcal{T} (X, \tau, a,s) = \left ( g \tau + \tau g^T - 2k \tau + 2\rho K (g + g^T ) \right)  (a,s),
\end{gathered} \right.
\end{equation}
and  $g$ is defined above in (\ref{gat}).  The main result of the paper is
\begin{theorem}\la{main}
Let $0<\alpha<1$ and $1<p<\infty$, be given. Let also $v_1(0) = u_1(0) \in C^{1+\alpha,p}$ and $v_2(0) = u_2(0)\in C^{1+\alpha, p}$ be given divergence-free initial velocities, and $\sigma_1(0), \sigma_2(0)\in C^{\alpha,p}$ be given initial stresses. Then there exists $T_0>0$ and $C>0$ depending on the norms of the initial data such that  
 $(X_1, \tau_1, v_1), (X_2, \tau_2, v_2)$, with initial data $(Id, \sigma_1 (0), u_1 (0)), (Id, \sigma_2 (0), u_2 (0))$, are bounded in 
$Id +Lip(0,T_0; C^{1+\alpha, p})\times Lip (0,T_0; C^{\alpha,p})\times L^{\infty}(0,T_0; C^{1+\alpha,p})$ and solve the Lagrangian form (\ref{fixedpt}) of (\ref{sys}). Moreover, 
\begin{equation}
\begin{gathered}
\norm{X_2 - X_1} _{Lip(0, T_0; C^{1+\alpha, p})} + \norm{\tau_2 - \tau_1 }_{Lip(0, T_0; C^{\alpha, p})} + \norm{v_2 - v_1}_{L^\infty (0, T_0, C^{1+\alpha, p})} 
\\ \le C ( \norm{u_2 (0) - u_1 (0) }_{1+\alpha, p} + \norm{\tau_2 (0) - \tau_1 (0) } _{\alpha, p})
\end{gathered}
\end{equation} \label{Uniq}
\end{theorem}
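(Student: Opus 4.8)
The plan is to realize the triple $(X,\tau,v)$ as the fixed point of the map $\Phi$ defined by the right-hand sides of (\ref{fixedpt}), and to extract both existence and the Lipschitz bound from a single difference estimate carried out on a short interval $[0,T_0]$. First I would record the a priori bounds that make $\Phi$ a self-map of a ball in $Id + Lip(0,T_0;C^{1+\alpha,p})\times Lip(0,T_0;C^{\alpha,p})\times L^\infty(0,T_0;C^{1+\alpha,p})$: the stress is controlled through its ODE $\frac{d}{dt}\tau = g\tau+\tau g^T-2k\tau+2\rho K(g+g^T)$ by Gr\"onwall once $\|g\|_{C^\alpha}$ is bounded, while $g$, $v$ and $X$ are controlled by the smoothing of the heat semigroup $\mathbb{L}_\nu$ together with the mapping properties of $\mathbb{U}$ and $\mathbb{G}$ after composition with $X$. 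Since $X$ is volume preserving, the change of variables $y=X(b)$ rewrites $\mathbb{O}(h\circ X^{-1})\circ X(a)=\int K(X(a)-X(b))\,h(b)\,db$ for $\mathbb{O}\in\{\mathbb{U},\mathbb{G}\}$ with the corresponding time-dependent kernel $K$; this kernel representation is the workhorse for all subsequent estimates.

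The core of the argument is the difference estimate for two solutions. Subtracting the integral equations gives $X_2-X_1=\int_0^t(\mathcal{V}_2-\mathcal{V}_1)\,ds$, $\tau_2-\tau_1=(\sigma_2(0)-\sigma_1(0))+\int_0^t(\mathcal{T}_2-\mathcal{T}_1)\,ds$, and $v_2-v_1=\mathcal{V}_2-\mathcal{V}_1$, so everything reduces to controlling the differences of the Lagrangian nonlinearities. Using the kernel representation I would split each difference $\mathbb{O}(h_2\circ X_2^{-1})\circ X_2-\mathbb{O}(h_1\circ X_1^{-1})\circ X_1$ into a term linear in $h_2-h_1$, handled directly by the mapping properties of $\mathbb{O}$, and a term linear in the diffeomorphism difference, for which one writes
\[
K(X_2(a)-X_2(b))-K(X_1(a)-X_1(b))=\int_0^1\nabla K(\cdots)\cdot\big[(X_2-X_1)(a)-(X_2-X_1)(b)\big]\,d\theta .
\]
The increment $(X_2-X_1)(a)-(X_2-X_1)(b)$ is exactly the commutator gain advertised in the introduction: it supplies the extra H\"older factor that lets the singular kernel $\nabla K$ act within the low-regularity spaces $C^{1+\alpha,p}$ and $C^{\alpha,p}$.

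The main obstacle is the time singularity of $\mathbb{G}$: in the time-dependent setting $\mathbb{G}$ is not bounded in $L^\infty(0,T;C^\alpha)$, so a direct $C^\alpha$ estimate of the $\tau$-difference breaks down. To resolve this I would use the factorization $\mathbb{G}(f)=R\otimes R\,(\mathbb{U}(\nabla_x f))$ to write $\mathbb{G}$ as the composition of the time-independent operator $R\otimes R$, bounded on the relevant H\"older space uniformly in time, with an operator whose spatial kernel — built from the heat kernel $g_{\nu(t-s)}$ and its derivatives — is smooth and rapidly decaying at each fixed $(t,s)$. On the diffeomorphism-difference contribution of this smooth-kernel factor I would then invoke the Lipschitz dependence in $L^1$ of a Schwartz function composed with a smoothly varying near-identity diffeomorphism: the resulting $L^1$ smallness of the composition difference compensates the otherwise non-integrable time weight, so that the $ds$-integral converges and retains a positive power of $T_0$.

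Assembling the pieces yields a closed inequality for $\mathcal{D}:=\|X_2-X_1\|_{Lip(C^{1+\alpha,p})}+\|\tau_2-\tau_1\|_{Lip(C^{\alpha,p})}+\|v_2-v_1\|_{L^\infty(C^{1+\alpha,p})}$ of the form
\[
\mathcal{D}\le C\big(\|u_2(0)-u_1(0)\|_{1+\alpha,p}+\|\tau_2(0)-\tau_1(0)\|_{\alpha,p}\big)+C\,T_0^{\gamma}\,\mathcal{D}
\]
for some $\gamma>0$, with the Gr\"onwall factor coming from the $\tau$-ODE absorbed into $C$. Choosing $T_0$ small absorbs the last term on the right, which simultaneously shows that $\Phi$ is a contraction — giving existence, uniqueness, and boundedness in the stated ball — and delivers the asserted Lipschitz estimate with the initial-data differences as the only source term.
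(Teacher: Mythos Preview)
Your overall architecture matches the paper's: a fixed-point map on $\mathcal{I}\subset\mathrm{Id}+Lip(0,T;C^{1+\alpha,p})\times Lip(0,T;C^{\alpha,p})\times L^\infty(0,T;C^{1+\alpha,p})$, commutator gain from the composition structure, and the factorization $\mathbb{G}=(R\otimes R)\mathbb{H}\,\Gamma$ with $\Gamma(f)(t)=\int_0^t\Delta g_{\nu(t-s)}*f(s)\,ds$ to isolate the time singularity. Your direct subtraction with a mean-value split is equivalent to the paper's $\epsilon$-variation that produces the commutators $[\eta\cdot\nabla,\mathbb{U}]$ and $[\eta\cdot\nabla,\mathbb{G}]$.

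There is, however, a genuine gap in how you deploy the $L^1$ Schwartz argument. You place it on the ``diffeomorphism-difference contribution'' in the uniqueness step, i.e.\ on the $X_1$ versus $X_2$ variation. That variation only buys a factor of $\|X_1-X_2\|$; it does \emph{not} supply a factor of $|t-s|$, so it cannot by itself integrate the $1/(\nu(t-s))$ weight coming from $\|\Delta g_{\nu(t-s)}\|_{L^1}$. The place where the $L^1$ lemma actually resolves the time singularity is one step earlier, in bounding $\mathbb{G}(\tau\circ X^{-1})$ for a \emph{single} flow --- exactly what you gloss over in your a priori paragraph under ``mapping properties of $\mathbb{G}$ after composition with $X$.'' The paper writes
\[
\sigma(x,s)-\sigma(x,t)=\underbrace{\tau(X^{-1}(x,s),s)-\tau(X^{-1}(x,s),t)}_{\Delta_1\tau}+\underbrace{\tau(X^{-1}(x,s),t)-\tau(X^{-1}(x,t),t)}_{\Delta_2\tau},
\]
uses $\tau\in Lip(0,T;C^{\alpha,p})$ on $\Delta_1\tau$, and on $\Delta_2\tau$ changes variables to obtain a kernel $K(x,z,t,s)=\Delta g_{\nu(t-s)}(x-X(z,s))-\Delta g_{\nu(t-s)}(x-X(z,t))$ whose $L^1$ norm in either variable is $\le C\nu^{-3/2}|t-s|^{-1/2}\|X-\mathrm{Id}\|_{Lip(0,T;L^\infty)}$. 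Here the ``smoothly varying diffeomorphism'' is $s\mapsto X(\cdot,s)$, and it is the \emph{time}-Lipschitz property of $X$ that converts the non-integrable $|t-s|^{-1}$ into the integrable $|t-s|^{-1/2}$. This same mechanism is then reused inside the commutator $[\eta\cdot\nabla,\mathbb{G}]$ estimate. Without it your self-map step has no bound on $g$, and your closed inequality for $\mathcal{D}$ cannot be reached.
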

\beg{remark}
The solutions' Lagrangian stresses $\tau$ are Lipschitz in time with values in $C^{\alpha}$. Their Lagrangian counterparts $\sigma = \tau\circ X^{-1}$ are  bounded in time with values in $C^{\alpha}$ and space-time H\"{o}lder continuous with exponent $\alpha$. The Eulerian version of the equations (\ref{sys}) is satisfied in the sense of distributions, and solutions are unique in this class.
\end{remark}

The spaces $C^{\alpha, p}$ are defined in the next section. The proof of the theorem occupies the rest of the paper. We start by considering variations of Lagrangian variables. We take a family $(X_\epsilon, \tau_\epsilon )$ of flow maps depending smoothly on a parameter $\epsilon \in [1,2]$, with initial data $u_{\epsilon, 0}$ and $\sigma_{\epsilon, 0}$. Note that $v_\epsilon = \partial_t X_\epsilon$. We use the following notations
\begin{equation}
\left \{ \begin{gathered}
u_\epsilon = \partial_t X_\epsilon \circ X_\epsilon ^{-1}, g_\epsilon ' = \frac{d}{d\epsilon} g_\epsilon, \\
X_\epsilon ' = \frac{d}{d\epsilon} X_\epsilon, \, \eta_\epsilon = X_\epsilon ' \circ X_\epsilon ^{-1}, \\
v_\epsilon ' = \frac{d}{d\epsilon} v_\epsilon, \\
\sigma_\epsilon = \tau_\epsilon \circ X_\epsilon^{-1}, \\
\tau_\epsilon ' = \frac{d}{d\epsilon} \tau_\epsilon, \delta_\epsilon = \tau_\epsilon ' \circ X_\epsilon ^{-1}, 
\end{gathered} \right.
\end{equation} 
and 
\begin{equation}
u_{\epsilon, 0} ' = \frac{d}{d\epsilon} u_\epsilon (0), \sigma_{\epsilon, 0} ' = \frac{d}{d\epsilon} \sigma_{\epsilon } (0).
\end{equation}
We  represent
\begin{equation}
\left \{ \begin{gathered}
X_2 (a,t) - X_1 (a,t) = \int_1 ^2 \mathcal{X}_\epsilon '  d\epsilon, \\
\tau_2 (a,t) - \tau_1 (a,t) =  \int_1 ^2  \pi_\epsilon d\epsilon  , \\
v_2 (a,t) - v_1 (a,t) = \int_1 ^2 \frac{d}{d\epsilon} \mathcal{V}_\epsilon d \epsilon,
\end{gathered} \right.
\end{equation}
where
\begin{equation}
\begin{gathered}
\mathcal{X}_\epsilon ' = \int_0 ^t \frac{d}{d\epsilon} \mathcal{V}_\epsilon  ds,\,\,  \pi_\epsilon = \int_0 ^t \frac{d}{d\epsilon} \mathcal{T}_\epsilon ds + \sigma_{\epsilon, 0} ', \\
\mathcal{V}_\epsilon = \mathcal{V} (X_\epsilon, \tau_\epsilon), \, \mathcal{T}_\epsilon = \mathcal{T} (X_\epsilon, \tau_\epsilon). \label{vars}
\end{gathered}
\end{equation}

We have the following commutator expressions arising by differentiating in $\epsilon$ (\cite{c1}, \cite{MR3660694})):

\begin{equation}
\begin{gathered}
\left ( \frac{d}{d\epsilon} \left ( \mathbb{U} (\tau_\epsilon \circ X_\epsilon ^{-1} ) \circ X_\epsilon \right ) \right ) \circ X_\epsilon ^{-1} = [\eta_\epsilon \cdot \nabla_x, \mathbb{U} ] (\sigma_\epsilon) + \mathbb{U} (\delta_\epsilon), 
\end{gathered}
\end{equation}
where 
\begin{equation}
[\eta_\epsilon \cdot \nabla_x, \mathbb{U} ] (\sigma_\epsilon) = \eta_\epsilon \cdot \nabla_x \left ( \mathbb{U} (\sigma_\epsilon) \right ) - \mathbb{U} \left ( \eta_\epsilon \cdot \nabla_x \sigma_\epsilon \right )
\end{equation}
and 
\begin{equation}
\begin{gathered}
\left (\frac{d} {d\epsilon} \mathbb{U} (v_\epsilon \otimes v_\epsilon \circ X_\epsilon ^{-1} ) \circ X_\epsilon \right ) \circ X_\epsilon ^{-1} \\
 = [ \eta_\epsilon  \cdot  \nabla_x,  \mathbb{U} ] (u_\epsilon \otimes u_\epsilon)    +  \mathbb{U} ((v_\epsilon' \otimes v_\epsilon + v_\epsilon \otimes v_\epsilon ' )\circ X_\epsilon ^{-1} ) .
\end{gathered}
\end{equation}
We note, by the chain rule,
\begin{equation}
\nabla_a \mathcal{V} = \left ( \nabla_a X \right ) g. \label{Vg}
\end{equation}
Consequently, differentiating $\mathcal V_{\epsilon}$, $g_{\epsilon}$ and the relation (\ref{Vg}) we have
\begin{equation}
\left \{
\begin{gathered}
\left ( \frac{d}{d\epsilon} \mathcal{V}_\epsilon \right ) \circ X_\epsilon ^{-1} = \eta_\epsilon \cdot ( \mathbb{L}_\nu (\nabla_x u_{\epsilon, 0})) + \mathbb{L}_\nu ( u_{\epsilon, 0} ' ) \\ + [\eta_\epsilon \cdot \nabla_x, \mathbb{U}] (\sigma_\epsilon - u_\epsilon \otimes u_\epsilon ) + \mathbb{U} (\delta_\epsilon - (v_\epsilon ' \otimes v_\epsilon + v_\epsilon \otimes v_\epsilon ' ) \circ X_\epsilon ^{-1} ), \\
g_\epsilon = \mathbb{L} (\nabla_x u_{\epsilon, 0} ) \circ X_\epsilon + \mathbb{G} (\sigma_\epsilon ) \circ X_\epsilon - \mathbb{U} (\nabla_x (u_\epsilon \otimes u_\epsilon ) ) \circ X_\epsilon, \\
g_\epsilon ' \circ X_\epsilon ^{-1} = \eta_\epsilon  \cdot \mathbb{L}_\nu (\nabla_x \nabla_x u_{\epsilon,0} ) + \mathbb{L}_\nu ( \nabla_x u_{\epsilon, 0} ') + [\eta_\epsilon \cdot \nabla_x, \mathbb{G} ] (\sigma_\epsilon ) + \mathbb{G} (\delta_\epsilon ) \\
- [\eta_\epsilon \cdot \nabla_x, \mathbb{U} ] \left ( \nabla_x (u_\epsilon \otimes u_\epsilon ) \right ) - \mathbb{U} \left (  \nabla_x \left ( \left (v_\epsilon ' \otimes v_\epsilon + v_\epsilon \otimes v_\epsilon ' \right ) \circ X_\epsilon ^{-1} \right ) \right ), \\
\frac{d}{d\epsilon} (\nabla_a \mathcal{V}_\epsilon ) = (\nabla_a X_\epsilon ' ) g_\epsilon + ( \nabla_a X_\epsilon ) g_\epsilon ', \\
\frac{d}{d\epsilon} \mathcal{T}_\epsilon = g_\epsilon ' \tau_\epsilon + g_\epsilon \tau_\epsilon' + \tau_\epsilon ' g_\epsilon ^T + \tau_\epsilon (g_\epsilon ' )^T - 2k\tau_\epsilon ' + 2\rho K (g_\epsilon ' + (g_\epsilon ' )^T). \\
\end{gathered} \right.
\end{equation}
%%%% Section 1 %%%%
%%%% Section 2 %%%%
\section{Functions, operators, commutators} \label{Operators}
We consider function spaces
\begin{equation}
C^{\alpha, p} = C^{\alpha} (\mathbb{R}^d) \cap L^p (\mathbb{R}^d)
\end{equation}
with norm
\begin{equation}
\norm{f}_{\alpha, p} = \norm{f}_{C^{\alpha} (\mathbb{R}^d)} + \norm{f}_{L^p (\mathbb{R}^d )}
\end{equation}
for $\alpha \in (0,1), p \in (1, \infty )$, $C^{1+\alpha} (\mathbb{R}^d)$ with norm
\begin{equation}
\norm{f}_{C^{1+\alpha} (\mathbb{R}^d)} = \norm{f}_{L^\infty (\mathbb{R}^d) } + \norm{ \nabla f} _{C^{\alpha} (\mathbb{R}^d)},
\end{equation}
and 
\begin{equation}
C^{1+\alpha, p} = C^{1+\alpha} (\mathbb{R}^d) \cap W^{1, p} (\mathbb{R}^d)
\end{equation}
with norm
\begin{equation}
\norm{f}_{1+\alpha, p} = \norm{f}_{C^{1+\alpha} (\mathbb{R}^d) } + \norm{f}_{W^{1,p} (\mathbb{R}^d ) } .
\end{equation}
We also use spaces of paths, $L^\infty(0, T; Y)$ with the usual norm,
\begin{equation}
\norm{f}_{L^\infty (0, T; Y) } = \sup_{t \in [0, T] } \norm{f(t)}_Y,
\end{equation}
spaces $Lip(0, T; Y)$ with norm
\begin{equation}
\norm{f}_{Lip(0, T; Y)} = \sup_{t\ne s, t,s \in [0, T] } \frac{ \norm{f(t) - f(s) }_Y}{|t-s|} + \norm{f}_{L^\infty (0, T; Y)}
\end{equation}
%and spaces $C^{\beta}(0, T; X)$ with norm
%\begin{equation}
%\norm{f}_{Lip(0, T; X)} = \sup_{t\ne s, t,s \in [0, T] } \frac{ \norm{f(t) - f(s) }_X}{|t-s|^\beta} + \norm{f}_{L^\infty (0, T; X)}
%\end{equation}
where $Y$ is $C^{\alpha, p}$ or $C^{1+\alpha, p}$ in the following. We use the following lemmas.
%%%%% Lemma 1 : PC CBMS note %%%%%
\begin{lemma} [ \cite{MR3660694}] \label{ComEs1}
Let $0 < \alpha < 1$, $1 < p < \infty$. Let $\eta \in C^{1+\alpha}(\mathbb{R}^d)$ and let 
\begin{equation}
(\mathbb{K} \sigma ) (x) = P. V. \int_{\mathbb{R}^d} k (x-y)\sigma(y)dy 
\end{equation}
be a classical Calderon-Zygmund operator with kernel $k$ which is smooth away from the origin, homogeneous of degree $-d$ and with mean zero on spheres about the origin. Then the commutator $[\eta \cdot \nabla, \mathbb{K}]$ can be defined as a bounded linear operator in $C^{\alpha, p}$ and
\begin{equation}
\norm{[\eta \cdot \nabla, \mathbb{K} ] \sigma}_{C^{\alpha, p} } \le C \norm{\eta}_{C^{1+\alpha}(\mathbb{R}^d)} \norm{\sigma}_{C^{\alpha, p} }.
\end{equation}
\end{lemma}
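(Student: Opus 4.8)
The plan is to rewrite the commutator as a genuine Calderón--Zygmund operator plus a harmless lower--order term, and then to extract the estimate from the kernel's size, smoothness, and — most importantly — a cancellation property that is the real source of the bound. First I would make the commutator explicit. Writing $\mathbb{K}\sigma(x)=\mathrm{P.V.}\int k(x-y)\sigma(y)\,dy$ and integrating by parts in the term $\mathbb{K}(\eta\cdot\nabla\sigma)$ (using $\nabla_y k(x-y)=-\nabla_x k(x-y)$), one obtains, in the usual truncation/principal--value sense,
\[
[\eta\cdot\nabla,\mathbb{K}]\sigma(x)=\int \big(\eta(x)-\eta(y)\big)\cdot\nabla k(x-y)\,\sigma(y)\,dy+\mathbb{K}\big((\nabla\cdot\eta)\,\sigma\big)(x).
\]
The second term is controlled immediately: $\nabla\cdot\eta\in C^\alpha$ with norm $\lesssim\|\eta\|_{C^{1+\alpha}}$, multiplication by a $C^\alpha$ function is bounded on $C^{\alpha,p}$, and a classical Calderón--Zygmund operator is bounded on $C^{\alpha,p}$; hence this term is bounded in $C^{\alpha,p}$ by $C\|\eta\|_{C^{1+\alpha}}\|\sigma\|_{C^{\alpha,p}}$. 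Everything then reduces to the operator $S$ with kernel $K_\eta(x,y)=(\eta(x)-\eta(y))\cdot\nabla k(x-y)$.

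Next I would verify that $K_\eta$ is a standard kernel with constant proportional to $\|\eta\|_{C^{1+\alpha}}$. Since $|\nabla k(z)|\le C|z|^{-d-1}$ and $|\eta(x)-\eta(y)|\le\|\nabla\eta\|_{L^\infty}|x-y|$, the size bound $|K_\eta(x,y)|\le C\|\nabla\eta\|_{L^\infty}|x-y|^{-d}$ holds. Splitting $K_\eta(x,y)-K_\eta(x',y)$ into the difference of the $\eta$--factors and the difference of the $\nabla k$--factors (the latter controlled by $|\nabla^2 k(z)|\le C|z|^{-d-2}$) gives the Hörmander smoothness $|K_\eta(x,y)-K_\eta(x',y)|\le C\|\nabla\eta\|_{L^\infty}\,|x-x'|\,|x-y|^{-d-1}$ for $|x-x'|\le\tfrac12|x-y|$, and symmetrically in $y$.

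The decisive point — and the step I expect to be the main obstacle — is the cancellation of $K_\eta$ near the diagonal, where both structural hypotheses on $k$ are used. Using the first--order Taylor expansion $\eta(x)-\eta(y)=\nabla\eta(x)\cdot(x-y)+\rho$ with $|\rho|\le C\|\eta\|_{C^{1+\alpha}}|x-y|^{1+\alpha}$, the remainder contributes a weakly singular, integrable kernel of order $|x-y|^{\alpha-d}$, while the principal part reduces to $\sum_{i,j}\partial_i\eta_j(x)\int_{\epsilon\le|z|\le R}z_i\,\partial_j k(z)\,dz$. I would show this last integral vanishes for every $0<\epsilon<R$: integration by parts rewrites $z_i\partial_j k=\partial_j(z_i k)-\delta_{ij}k$; the divergence term telescopes to zero between the two spheres because $k$ is homogeneous of degree $-d$, and the $\delta_{ij}k$ term vanishes because $k$ has mean zero on spheres. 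This yields the truncated cancellation $\big|\int_{r\le|x-y|\le R}K_\eta(x,y)\,dy\big|\le C\|\eta\|_{C^{1+\alpha}}R^\alpha$, the gain of $\alpha$ being exactly what the $C^{1+\alpha}$ hypothesis provides.

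With size, smoothness and cancellation established, the two estimates follow by classical routes. For $L^p$ boundedness the cancellation gives $S(1),S^*(1)\in \mathrm{BMO}$ and the weak boundedness property, so the $T(1)$ theorem yields $L^2$ boundedness and hence $\|S\sigma\|_{L^p}\le C\|\eta\|_{C^{1+\alpha}}\|\sigma\|_{L^p}$ for $1<p<\infty$ (equivalently, this is the classical Calderón commutator bound). For the $C^\alpha$ bound I would estimate $S\sigma(x)-S\sigma(x')$ with $h=|x-x'|$ by the standard near/far decomposition: on $|y-x|\le 2h$ subtract $\sigma(x)$, using the Hölder continuity of $\sigma$ on the difference and the truncated cancellation on the constant; on $|y-x|>2h$ use the kernel smoothness on $\sigma(y)-\sigma(x)$ and again the cancellation on the constant. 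Summing the contributions gives $|S\sigma(x)-S\sigma(x')|\le C\|\eta\|_{C^{1+\alpha}}\|\sigma\|_{C^\alpha}h^\alpha$, and combining this with the $L^p$ estimate and the bound for the lower--order term completes the proof.
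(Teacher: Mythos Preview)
The paper does not give a proof of this lemma; it is simply quoted from \cite{MR3660694}. Your outline is correct and is essentially the classical argument for the Calder\'on first commutator, specialized to $C^{\alpha,p}$: the integration--by--parts identity rewrites $[\eta\cdot\nabla,\mathbb{K}]\sigma$ as the operator with kernel $K_\eta(x,y)=(\eta(x)-\eta(y))\cdot\nabla k(x-y)$ plus the lower--order term $\mathbb{K}((\nabla\cdot\eta)\sigma)$; your verification that $K_\eta$ is a standard kernel and that $\int_{\epsilon\le|z|\le R}z_i\partial_j k(z)\,dz=0$ (using both the homogeneity of degree $-d$ for the boundary terms and the spherical mean--zero for the $\delta_{ij}k$ term) is correct; and the $L^p$ bound via $T(1)$ and the $C^\alpha$ seminorm via the near/far splitting are the standard routes. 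One small point worth making explicit in a full write--up is the $L^\infty$ part of the $C^\alpha$ norm of $S\sigma$: near the diagonal you use the cancellation and $[\sigma]_\alpha$, while for $|x-y|>1$ you pair $|K_\eta|\lesssim\|\nabla\eta\|_{L^\infty}|x-y|^{-d}\in L^{p'}(\{|z|>1\})$ (here $p<\infty$ is used) against $\sigma\in L^p$.
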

%%%%% Lemma 2 : Gen Young %%%%%
\begin{lemma} [Generalized Young's inequality] \label{GenY}
Let $1 \le q \le \infty$ and $C > 0$. Suppose $K$ is a measurable function on $\mathbb{R}^d \times \mathbb{R}^d$ such that
\begin{equation}
\sup_{x \in \mathbb{R}^d} \int_{\mathbb{R}^d} | K(x, y) | dy \le C, \,\, \sup_{y \in \mathbb{R}^d} \int_{\mathbb{R}^d } |K(x, y) | dx \le C.
\end{equation}
If $f \in L^q (\mathbb{R}^d)$, the function $Tf$ defined by
\begin{equation}
Tf (x) = \int_{\mathbb{R}^d} K(x, y) f(y) dy
\end{equation}
is well defined almost everywhere and is in $L^q$, and $\norm{Tf}_{L^q} \le C \norm{f}_{L^q}$.
\end{lemma}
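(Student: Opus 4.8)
The plan is to prove this by a classical Schur-test argument, treating the endpoint cases $q=\infty$ and $q=1$ separately and reducing $1<q<\infty$ to them via a Hölder splitting of the kernel. First I would dispose of $q=\infty$: for $f\in L^\infty$ one has, for a.e.\ $x$, $|Tf(x)|\le \int_{\mathbb{R}^d}|K(x,y)|\,|f(y)|\,dy \le \norm{f}_{L^\infty}\int_{\mathbb{R}^d}|K(x,y)|\,dy\le C\norm{f}_{L^\infty}$, using the first hypothesis. The case $q=1$ follows from Tonelli's theorem:
\[
\int_{\mathbb{R}^d}|Tf(x)|\,dx\le \int_{\mathbb{R}^d}\!\!\int_{\mathbb{R}^d} |K(x,y)|\,|f(y)|\,dy\,dx = \int_{\mathbb{R}^d} |f(y)|\Big(\int_{\mathbb{R}^d}|K(x,y)|\,dx\Big)dy \le C\norm{f}_{L^1},
\]
now using the second hypothesis. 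In both endpoint cases the finiteness of the bound simultaneously shows that $Tf$ is well defined almost everywhere.

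The substantive case is $1<q<\infty$, where I would introduce the conjugate exponent $q'$ with $1/q+1/q'=1$ and split $|K(x,y)|=|K(x,y)|^{1/q'}\,|K(x,y)|^{1/q}$. Applying Hölder's inequality in the $y$-variable with exponents $q'$ and $q$ gives, for each $x$,
\[
|Tf(x)|\le \Big(\int_{\mathbb{R}^d}|K(x,y)|\,dy\Big)^{1/q'}\Big(\int_{\mathbb{R}^d}|K(x,y)|\,|f(y)|^{q}\,dy\Big)^{1/q}\le C^{1/q'}\Big(\int_{\mathbb{R}^d}|K(x,y)|\,|f(y)|^{q}\,dy\Big)^{1/q}.
\]
Raising to the $q$-th power, integrating in $x$, and invoking Tonelli's theorem to exchange the order of integration yields
\[
\int_{\mathbb{R}^d}|Tf(x)|^{q}\,dx\le C^{q/q'}\int_{\mathbb{R}^d}|f(y)|^{q}\Big(\int_{\mathbb{R}^d}|K(x,y)|\,dx\Big)dy\le C^{q/q'+1}\norm{f}_{L^q}^{q}.
\]
Since $q/q'+1=q$, taking $q$-th roots produces $\norm{Tf}_{L^q}\le C\norm{f}_{L^q}$.

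The main point to watch is the justification that $Tf$ is defined for almost every $x$ together with the legitimacy of the Tonelli interchanges: because every integrand appearing in the estimates above is nonnegative, Tonelli applies unconditionally, and the finiteness of the resulting bounds then guarantees that for a.e.\ $x$ the inner integral $\int_{\mathbb{R}^d}|K(x,y)||f(y)|\,dy$ converges, so $Tf(x)$ is well defined. The only genuine subtlety is the kernel-splitting step, where one must observe that $|K(x,y)|^{1/q'}$ and $|K(x,y)|^{1/q}|f(y)|$ lie in $L^{q'}_y$ and $L^{q}_y$ respectively; this is precisely what the Hölder application verifies, and no regularity of $K$ beyond measurability and the two integrability hypotheses is required.
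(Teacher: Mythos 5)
Your proof is correct, and for the main case $1<q<\infty$ it takes a slightly different route from the one the paper sketches. The paper handles the endpoints $q=1,\infty$ exactly as you do (``by inspection''), but for the intermediate exponents it invokes duality: one pairs $Tf$ against $g\in L^{q'}$, bounds $\iint |K(x,y)|\,|f(y)|\,|g(x)|\,dy\,dx$ by applying Young's numerical inequality $ab\le a^{q}/q+b^{q'}/q'$ pointwise to $|f(y)|\,|g(x)|$, and then uses the row bound on the $|f|^{q}$ term and the column bound on the $|g|^{q'}$ term after interchanging the order of integration; normalizing $\norm{f}_{L^q}=\norm{g}_{L^{q'}}=1$ yields the bound $C(1/q+1/q')=C$, and taking the supremum over $g$ gives $\norm{Tf}_{L^q}\le C\norm{f}_{L^q}$. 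You instead split the kernel as $|K|=|K|^{1/q'}|K|^{1/q}$ and apply H\"older directly in $y$, then Tonelli in $x$ --- the classical Schur test. The two arguments are close cousins (both ultimately rest on the convexity behind H\"older/Young plus Tonelli), but yours has two small advantages: it is self-contained at the level of H\"older and Tonelli, with no appeal to the duality characterization of the $L^q$ norm, and it delivers the a.e.\ finiteness of $\int |K(x,y)|\,|f(y)|\,dy$ --- hence the ``well defined almost everywhere'' clause --- as an automatic by-product, whereas the duality route requires a separate remark for that point. What the duality route buys in exchange is that it dispenses with the kernel splitting and uses each of the two hypotheses exactly once in a transparent way. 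Your handling of the Tonelli interchanges and of the exponent bookkeeping $q/q'+1=q$ is accurate, so there is no gap.
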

The proof of this lemma for $1<q<\infty$ is done using duality, a straightforward application of Young's inequality and changing order of integration. The extreme cases $q=1$ and $q=\infty$ are proved directly by inspection.

For simplicity of notation, let us denote
\begin{equation}
M_X = 1 + \norm{X - \mathrm{Id}}_{L^\infty (0, T; C^{1+\alpha})}.
\end{equation}
%%%%% Theorem 1 %%%%%
\begin{theorem}
Let $0 < \alpha < 1, 1<p<\infty$ and let $T>0$. Also let $X$ be a volume preserving diffeomorphism such that $X - \mathrm{Id} \in Lip (0, T; C^{1+\alpha})$. Then
\begin{equation}
\begin{gathered}
\norm{\tau \circ X^{-1} }_{L^\infty (0, T; C^{\alpha, p})} \le \norm{\tau}_{L^\infty (0, T; C^{\alpha, p})} M_X ^\alpha.
\end{gathered}
\end{equation} 
If $X' \in Lip(0, T; C^{1+\alpha})$, then
\begin{equation}
\norm{X' \circ X^{-1} }_{L^\infty (0, T; C^{1+\alpha})} \le \norm{X'}_{L^\infty (0, T; C^{1+\alpha})} M_X ^{1+ 2\alpha}.
\end{equation}
If $v \in Lip(0, T; W^{1,p})$, then
\begin{equation}
\norm{v \circ X^{-1} }_{L^\infty (0, T; W^{1,p} ) } \le \norm{v}_{L^\infty (0, T; W^{1,p})} M_X .
\end{equation}
If in addition $\partial_t X'$, $\partial_t X $ exist in $L^\infty (0, T; C^{1+\alpha})$, then
\begin{equation}
\begin{gathered}
\norm{X' \circ X^{-1} }_{Lip(0, T; C^{\alpha} ) } \le \norm{X'}_{Lip(0, T; C^{1+\alpha})} \norm{X - \mathrm{Id}}_{Lip(0, T; C^{1+\alpha})}  M_X ^{1+ 3\alpha}.
\end{gathered}
\end{equation}
\label{p}
\end{theorem}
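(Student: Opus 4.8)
The plan is to treat all four inequalities as instances of a single principle: composing with a volume-preserving diffeomorphism $X$ costs only powers of $M_X$, and the exponent is dictated by how many Hölder seminorms and inverse-Jacobian factors appear. I would isolate three elementary building blocks and one key sub-estimate. The building blocks are: (i) volume preservation, $\det \nabla X \equiv 1$, which by change of variables gives $\norm{f \circ X^{\pm 1}}_{L^p} = \norm{f}_{L^p}$ and $\norm{f \circ X^{\pm 1}}_{L^\infty} = \norm{f}_{L^\infty}$; (ii) the seminorm scaling $[f \circ Y]_{C^\alpha} \le [f]_{C^\alpha} (\mathrm{Lip}\, Y)^\alpha$ with $\mathrm{Lip}\, Y = \norm{\nabla Y}_{L^\infty}$; and (iii) the fact that $C^\alpha \cap L^\infty$ is a Banach algebra, combined with the chain rule $\nabla(f \circ Y) = (\nabla f \circ Y)\, \nabla Y$. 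The key sub-estimate, used repeatedly, controls the inverse Jacobian: since $\det \nabla X = 1$, Cramer's rule gives $\nabla(X^{-1}) = (\mathrm{cof}\,\nabla X)^T \circ X^{-1}$, whose entries are minors of $\nabla X$ (in two dimensions simply linear in $\nabla X$), so by (ii)--(iii) one obtains $\norm{\nabla(X^{-1})}_{C^\alpha} \le C M_X^{1+\alpha}$ and $\norm{\nabla(X^{-1})}_{L^\infty} \le M_X$. With these in hand the first three inequalities are immediate: for $\tau \circ X^{-1}$ the $L^p$ and $L^\infty$ parts are preserved and the single Hölder seminorm contributes $M_X^\alpha$; for $X' \circ X^{-1}$ in $C^{1+\alpha}$ the gradient $(\nabla X' \circ X^{-1})\,\nabla(X^{-1})$ is estimated through the algebra property as $M_X^\alpha \cdot M_X^{1+\alpha} = M_X^{1+2\alpha}$; and for $v \circ X^{-1}$ in $W^{1,p}$ the gradient carries the single factor $\norm{\nabla(X^{-1})}_{L^\infty} \le M_X$.

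The fourth inequality is the substantive one, and the added hypothesis that $\partial_t X, \partial_t X'$ exist in $L^\infty(0,T; C^{1+\alpha})$ is exactly what licenses the approach: rather than directly estimating the difference of two compositions at times $t$ and $s$, I would differentiate $\Phi(t) := X'(t) \circ X^{-1}(t)$ in time, bound $\sup_t \norm{\partial_t \Phi(t)}_{C^\alpha}$, and use $\norm{\Phi(t) - \Phi(s)}_{C^\alpha} \le |t-s| \sup_r \norm{\partial_t \Phi(r)}_{C^\alpha}$ to produce the Lipschitz-in-time seminorm. Writing $Y = X^{-1}$ and using the standard formula for the time derivative of the inverse flow, obtained by differentiating $X(Y,t) = \mathrm{Id}$, namely $\partial_t Y = -(\nabla Y)\,(\partial_t X \circ Y)$, the chain rule yields
\[
\partial_t \Phi = (\partial_t X') \circ Y - (\nabla X' \circ Y)\, (\nabla Y)\, (\partial_t X \circ Y).
\]
The first term is handled by the composition estimate (ii) alone and contributes the lower-order quantity $C\norm{X'}_{Lip(0,T;C^{1+\alpha})} M_X^\alpha$. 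The second term is a product of three $C^\alpha$ factors, and this is where the powers accumulate.

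For the product term I would invoke the Banach-algebra property together with (ii) and the inverse-Jacobian sub-estimate: $\norm{\nabla X' \circ Y}_{C^\alpha} \le C\norm{X'}_{C^{1+\alpha}} M_X^\alpha$, $\norm{\nabla Y}_{C^\alpha} \le C M_X^{1+\alpha}$, and $\norm{\partial_t X \circ Y}_{C^\alpha} \le C\norm{\partial_t X}_{C^\alpha} M_X^\alpha \le C\norm{X - \mathrm{Id}}_{Lip(0,T;C^{1+\alpha})} M_X^\alpha$, where the last step uses that the Lipschitz-in-time seminorm controls $\norm{\partial_t X}_{L^\infty(0,T;C^{1+\alpha})}$. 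Multiplying, the three seminorm/Jacobian factors produce $M_X^\alpha \cdot M_X^{1+\alpha} \cdot M_X^\alpha = M_X^{1+3\alpha}$ together with the product $\norm{X'}_{Lip(0,T;C^{1+\alpha})}\, \norm{X - \mathrm{Id}}_{Lip(0,T;C^{1+\alpha})}$, which is precisely the claimed right-hand side.

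The main obstacle, and the step requiring the most care, is the inverse-flow regularity packaged into the sub-estimate $\norm{\nabla(X^{-1})}_{C^\alpha} \le C M_X^{1+\alpha}$, and implicitly the existence and $C^\alpha$-boundedness of $\partial_t(X^{-1})$: one must verify that the inverse flow inherits both the spatial $C^\alpha$ smoothness of $\nabla X$ and the time-Lipschitz regularity of $X$, with the powers of $M_X$ honestly tracked. Volume preservation is what keeps this clean, since Cramer's rule expresses $\nabla(X^{-1})$ through minors of $\nabla X$ rather than through an iterated Neumann series. I would also stress the structural reason the estimate succeeds at this low regularity, in contrast with the failure flagged in the introduction for $\sigma = \tau \circ X^{-1}$: here the composed function $X'$ lies in $C^{1+\alpha}$, one derivative above the target space $C^\alpha$, so the chain-rule term $\nabla X' \circ Y$ lands safely in $C^\alpha$; for $\tau$, which is only $C^\alpha$, the analogous differentiation would require $\nabla \tau$, which is unavailable, and the corresponding time-Lipschitz bound genuinely fails. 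This spare derivative is exactly the margin that makes Theorem \ref{p} hold.
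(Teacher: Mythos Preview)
Your proposal is correct and matches the paper's approach. Both handle the first three inequalities via volume preservation, the seminorm scaling under composition, and the chain rule for $\nabla_x(X'\circ X^{-1}) = ((\nabla_a X)\circ X^{-1})^{-1}(\nabla_a X')\circ X^{-1}$, and both obtain the fourth by differentiating $X'\circ X^{-1}$ in time---the paper writes this as the integral $\int_0^1 \partial_\tau\bigl[X'(X^{-1}(x,\beta_\tau),\beta_\tau)\bigr]\,d\tau$ with $\beta_\tau=\tau t+(1-\tau)s$---then uses the identity $\partial_t X^{-1} = -((\partial_t X)\circ X^{-1})((\nabla_a X)^{-1}\circ X^{-1})$ and multiplies the three $C^\alpha$ factors to produce $M_X^{1+3\alpha}$.
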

%%%%% Theorem 1 %%%%%
\begin{proof}
\begin{equation}
\norm{\tau \circ X^{-1}}_{L^p \cap L^\infty} = \norm{\tau}_{L^p \cap L^\infty},
\end{equation}
and, denoting the seminorm
\[
\left [\tau\right]_{\alpha} = \sup_{a\ne b, a,b\in \Rr^2}\fr{|\tau(a)-\tau(b)|}{|a-b|^{\alpha}}
\]
we have
\begin{equation}
\begin{gathered}
\left [ \tau \circ X^{-1} (t) \right ]_\alpha \le \left [ \tau (t) \right ]_\alpha \norm{\nabla_x X^{-1} (t) } _{L^\infty} ^{\alpha} \le \left [ \tau (t) \right ]_\alpha  (1+ \norm{X - \mathrm{Id}}_{L^\infty (0, T; C^{1+\alpha})}  ) ^\alpha .
\end{gathered}
\end{equation}
Note that this shows that the same bound holds when we replace $X^{-1}$ by $X$. For the second and third part, it suffices to remark that
\begin{equation}
\nabla_x (X' \circ X^{-1} ) =  \left ( (\nabla_a X ) \circ X^{-1} \right ) ^{-1}  \left ( (\nabla_a X') \circ X^{-1} \right ) 
\end{equation}
and the previous part gives the bound in terms of Lagrangian variables. For the last part, we note that
\begin{equation}
\begin{gathered}
\frac{1}{t-s} \left ( X' \left (X^{-1} (x,t),t \right ) - X' \left (X^{-1} (x,s),s \right ) \right ) \\  = \int_0 ^1 \left ( (\partial_t X' ) \left (X^{-1} (x,\beta_\tau),\beta_\tau \right ) + \left (\partial_t X^{-1} \right )(x, \beta_\tau) (\nabla_a X')\left (X^{-1} (x, \beta_\tau),\beta_\tau \right ) \right ) d\tau, 
\end{gathered}
\end{equation}
where
\begin{equation}
\beta_\tau = \tau t + (1- \tau) s.
\end{equation}
Now noting that
\begin{equation}
\partial_t X^{-1} = - \left (\left (\partial_t X \right ) \circ X^{-1} \right ) \left ( (\nabla_a X)^{-1} \circ X^{-1}  \right )
\end{equation}
we have
\begin{equation}
\begin{gathered}
\frac{1}{t-s} \norm{X' \circ X^{-1} (t) - X' \circ X^{-1} (s) }_{C^{\alpha}} \\ 
\le \left ( \norm{\partial_t X' }_{L^\infty (0, T; C^{\alpha})} + \norm{\partial_t X}_{L^\infty (0, T; C^{\alpha} ) } \norm{X'}_{L^\infty (0, T; C^{1+\alpha} )} \right )  \left (1 + \norm{X - \mathrm{Id}}_{L^\infty (0, T; C^{1+\alpha})} \right ) ^{1+ 3\alpha}
\end{gathered}
\end{equation}
so that
\begin{equation}
\begin{gathered}
\norm{X' \circ X^{-1} }_{Lip(0, T; C^{\alpha} ) } \le \norm{X'}_{Lip(0, T; C^{1+\alpha})} \norm{X - \mathrm{Id}}_{Lip(0, T; C^{1+\alpha})} \left (1 + \norm{X - \mathrm{Id}}_{L^\infty (0, T; C^{1+\alpha})} \right ) ^{1+ 3\alpha}.
\end{gathered}
\end{equation}
\end{proof}
%%%% path %%%%

%%%%% Theorem 2 %%%%%
\begin{theorem}
Let $0 < \alpha < 1, 1 < p < \infty$ and let $T>0$. There exists a constant $C$ independent of $T$ and $\nu$ such that for any $0< t< T$,
\begin{equation}
\begin{gathered}
\norm{\mathbb{L}_\nu (u_0) }_{L^\infty (0, T; C^{\alpha, p})} \le C \norm{u_0}_{\alpha, p}, \\
\norm{\mathbb{L}_\nu (u_0) }_{L^\infty (0, T; C^{1+\alpha, p})} \le C \norm{u_0}_{1+ \alpha, p}, \\
\norm{\mathbb{L}_\nu (\nabla u_0 ) ( t) }_{\alpha, p} \le \frac{C}{(\nu t)^{\frac{1}{2}}} \norm{u_0}_{\alpha, p}, \\
\norm{\mathbb{L}_\nu (\nabla u_0)}_{L^\infty (0, T; C^{\alpha, p})} \le C \norm{u_0}_{1+\alpha, p}
\end{gathered}
\end{equation} \label{L}
hold.
%%%% L part %%%%
\end{theorem}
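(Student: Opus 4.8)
The proof rests on two elementary properties of the heat kernel valid for every $\nu>0$ and $t>0$, which I would record at the outset: the mass normalization
\[
g_{\nu t}\ge 0,\qquad \int_{\mathbb{R}^d} g_{\nu t}(x)\,dx = 1,
\]
and the exact $L^1$ scaling of its gradient,
\[
\norm{\nabla g_{\nu t}}_{L^1(\mathbb{R}^d)} = \frac{c_d}{(\nu t)^{1/2}},
\]
with $c_d$ depending only on the dimension. The second identity I would obtain from $\nabla g_{\nu t}(x) = -\frac{x}{2\nu t}\,g_{\nu t}(x)$ together with the change of variables $x\mapsto(\nu t)^{1/2}x$, which shows that $\int|x|\,g_{\nu t}(x)\,dx$ equals a dimensional constant times $(\nu t)^{1/2}$. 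Since $g_{\nu t}$ is a probability density, convolution against it contracts every norm built from translation-invariant seminorms: Young's inequality gives $\norm{g_{\nu t}*f}_{L^p}\le\norm{f}_{L^p}$ and $\norm{g_{\nu t}*f}_{L^\infty}\le\norm{f}_{L^\infty}$, while writing $(g_{\nu t}*f)(x)-(g_{\nu t}*f)(y)=\int g_{\nu t}(z)\big(f(x-z)-f(y-z)\big)\,dz$ and using $\int g_{\nu t}=1$ gives $[g_{\nu t}*f]_\alpha\le[f]_\alpha$. Combining these yields $\norm{g_{\nu t}*f}_{\alpha,p}\le\norm{f}_{\alpha,p}$, which is precisely the first estimate.

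For the second estimate the plan is to use that differentiation commutes with convolution, $\nabla(g_{\nu t}*u_0)=g_{\nu t}*\nabla u_0$. The $L^\infty$ and $L^p$ portions of the $C^{1+\alpha,p}$ norm are controlled by applying the first estimate to $u_0$ itself, while the $C^\alpha$ seminorm and $L^p$ norm of $\nabla(g_{\nu t}*u_0)$ are controlled by the contraction estimates applied to $\nabla u_0$; summing gives $\norm{g_{\nu t}*u_0}_{1+\alpha,p}\le C\norm{u_0}_{1+\alpha,p}$ uniformly in $t$.

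The third estimate is the only one producing a negative power of $\nu t$, and the mechanism is to move the derivative off the data and onto the kernel. Since $u_0$ is assumed only in $C^{\alpha,p}$, I would integrate by parts to write $g_{\nu t}*\nabla u_0=\nabla g_{\nu t}*u_0$, and then repeat the Young/difference-quotient argument of the first estimate with $\norm{\nabla g_{\nu t}}_{L^1}$ replacing $\norm{g_{\nu t}}_{L^1}=1$, obtaining
\[
\norm{\nabla g_{\nu t}*u_0}_{\alpha,p}\le\norm{\nabla g_{\nu t}}_{L^1}\,\norm{u_0}_{\alpha,p}=\frac{c_d}{(\nu t)^{1/2}}\,\norm{u_0}_{\alpha,p}.
\]
The fourth estimate then costs no new work: when $u_0\in C^{1+\alpha,p}$ the definitions of the norms give $\nabla u_0\in C^{\alpha,p}$ with $\norm{\nabla u_0}_{\alpha,p}\le\norm{u_0}_{1+\alpha,p}$, so applying the first estimate to $\nabla u_0$ yields $\norm{g_{\nu t}*\nabla u_0}_{\alpha,p}\le\norm{\nabla u_0}_{\alpha,p}\le\norm{u_0}_{1+\alpha,p}$, uniformly in $t$.

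There is no genuine analytic obstacle; the substance is bookkeeping. The single point demanding care is the assertion that $C$ is independent of both $T$ and $\nu$, and this is exactly why the gradient of the kernel must be tracked through its sharp scaling $\norm{\nabla g_{\nu t}}_{L^1}=c_d(\nu t)^{-1/2}$ rather than any cruder bound, and why the borderline third estimate is stated with the explicit factor $(\nu t)^{-1/2}$ rather than absorbing it into a constant. Keeping the two normalizations $\int g_{\nu t}=1$ and $\norm{\nabla g_{\nu t}}_{L^1}=c_d(\nu t)^{-1/2}$ separate is what guarantees uniformity in the viscosity.
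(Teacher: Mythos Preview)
Your proof is correct and follows essentially the same approach as the paper: both rely on the two facts $\norm{g_{\nu t}}_{L^1}=1$ and $\norm{\nabla g_{\nu t}}_{L^1}=c_d(\nu t)^{-1/2}$ together with Young's inequality (and the translation invariance of the H\"older seminorm) to bound convolution against the heat kernel and its gradient. Your write-up is simply more explicit about why each norm component is controlled, whereas the paper records the four estimates in one line each.
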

%%%%% Theorem 2 %%%%%
\begin{proof}
\begin{equation}
\begin{gathered}
\norm{\mathbb{L}_\nu (u_0) (t) } _{\alpha, p} \le \norm{g_{\nu t} }_{L^1} \norm{u_0}_{\alpha, p} = \norm{u_0} _{\alpha, p}, \\
\norm{\mathbb{L}_\nu (u_0) (t) } _{1+ \alpha, p} \le \norm{g_{\nu t} }_{L^1} \norm{u_0}_{1+\alpha, p} = \norm{u_0} _{1+\alpha, p}, \\
\norm{\mathbb{L}_\nu (\nabla u_0) (t) } _{\alpha, p} \le \norm{\nabla g_{\nu t} }_{L^1} \norm{u_0}_{1+ \alpha, p} = \frac{C}{(\nu t)^{\frac{1}{2}}} \norm{u_0} _{\alpha, p}, \\
\norm{\mathbb{L}_\nu (\nabla u_0) (t) } _{\alpha, p} \le \norm{g_{\nu t} }_{L^1} \norm{\nabla u_0}_{\alpha, p} \le \norm{u_0} _{1+ \alpha, p}.
\end{gathered}
\end{equation}
\end{proof}
%%%%% Theorem 3 %%%%%
\begin{theorem}
Let $0<\alpha<1, 1<p<\infty$ and let $T>0$. There exists a constant $C$ such that
\begin{equation}
\norm{\mathbb{U} (\sigma) } _{L^\infty (0, T; C^{\alpha, p})} \le C \left ( \frac{T}{\nu} \right )^{\frac{1}{2}} \norm{\sigma}_{L^\infty (0, T; C^{\alpha, p})}.
\end{equation} \label{U}
%%%% U part %%%%
\end{theorem}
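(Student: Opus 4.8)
The plan is to estimate $\mathbb{U}(\sigma)(t)$ for each fixed $t\in[0,T]$ in the norm $\norm{\cdot}_{\alpha,p}$ and then take the supremum in $t$. The starting point is to transfer the spatial derivative hidden in $\mathrm{div}$ from $\sigma$ onto the heat kernel. Since $g_{\nu(t-s)}$, the Leray-Hodge projector $\mathbb{H}$, and $\mathrm{div}$ are all translation-invariant (Fourier multiplier) operators, they commute, and the derivative may be moved onto the kernel:
\begin{equation}
g_{\nu(t-s)} * \mathbb{H}\,\mathrm{div}\,\sigma(s) = \mathbb{H}\left[(\nabla g_{\nu(t-s)}) * \sigma(s)\right].
\end{equation}
Thus I would write
\begin{equation}
\mathbb{U}(\sigma)(t) = \int_0^t \mathbb{H}\left[(\nabla g_{\nu(t-s)}) * \sigma(s)\right]\,ds,
\end{equation}
which exposes the essential structure: a convolution against the integrable kernel $\nabla g_{\nu(t-s)}$ followed by a bounded singular integral operator.

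Next I would assemble two uniform boundedness facts. First, $\mathbb{H} = \mathbb{I} + R\otimes R$ is a bounded operator on $C^{\alpha,p}$: the Riesz transforms preserve the $C^\alpha$ seminorm and are bounded on $L^p$ for $1<p<\infty$, and since the $C^{\alpha,p}$ norm is controlled by the seminorm together with the $L^p$ norm, $\mathbb{H}$ maps $C^{\alpha,p}$ to itself with a norm independent of $t$, $s$, $\nu$. Second, convolution with the $L^1$ function $\nabla g_{\nu(t-s)}$ is bounded on $C^{\alpha,p}$ with operator norm at most $\norm{\nabla g_{\nu(t-s)}}_{L^1}$; indeed, Young's inequality (Lemma \ref{GenY}) handles the $L^p$ and $L^\infty$ parts, while the elementary estimate $[\phi * f]_\alpha \le \norm{\phi}_{L^1}[f]_\alpha$ handles the H\"older seminorm. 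Combining these with the heat kernel gradient bound $\norm{\nabla g_{\nu(t-s)}}_{L^1} = C(\nu(t-s))^{-1/2}$ (as in the proof of Theorem \ref{L}), I obtain the pointwise-in-time estimate
\begin{equation}
\norm{\mathbb{H}\left[(\nabla g_{\nu(t-s)}) * \sigma(s)\right]}_{\alpha,p} \le \frac{C}{(\nu(t-s))^{1/2}}\,\norm{\sigma(s)}_{\alpha,p}.
\end{equation}

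Finally I would integrate in $s$. Bounding $\norm{\sigma(s)}_{\alpha,p}$ by $\norm{\sigma}_{L^\infty(0,T;C^{\alpha,p})}$ and using
\begin{equation}
\int_0^t \frac{ds}{(\nu(t-s))^{1/2}} = \frac{2t^{1/2}}{\nu^{1/2}} \le 2\left(\frac{T}{\nu}\right)^{1/2},
\end{equation}
gives the desired bound after taking the supremum over $t\in[0,T]$. The one point that deserves care --- and the only genuine obstacle --- is the time-singular factor $(t-s)^{-1/2}$ produced by differentiating the heat kernel: moving the full spatial derivative in $\mathrm{div}$ onto $g_{\nu(t-s)}$ costs exactly one power of $(\nu(t-s))^{-1/2}$ in $L^1$. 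The decisive observation is that this singularity has exponent $1/2 < 1$ and is therefore integrable in $s$; the parabolic smoothing of the heat semigroup converts what looks like a loss of a derivative into a merely integrable time singularity, which is what produces the favorable factor $(T/\nu)^{1/2}$ and, in particular, its smallness for short times.
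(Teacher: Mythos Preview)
Your proof is correct and follows essentially the same approach as the paper: move the derivative in $\mathrm{div}$ onto the heat kernel, use the boundedness of $\mathbb{H}$ on $C^{\alpha,p}$ together with Young's inequality for convolution with $\nabla g_{\nu(t-s)}\in L^1$, and integrate the resulting $(\nu(t-s))^{-1/2}$ singularity in $s$. The paper's proof is simply a compressed one-line version of your argument.
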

%%%%% Theorem 3 %%%%%
\begin{proof}
\begin{equation}
\begin{gathered}
\norm{\mathbb{U} (\sigma) (t)}_{C^{\alpha, p}} \le C \int_0 ^t \norm{\nabla g_{\nu (t-s)}}_{L^1} \norm{\sigma(s) }_{\alpha, p} ds \\
\le \frac{C}{\nu^{\frac{1}{2}}} \int_0 ^t \frac{1}{(t-s)^{\frac{1}{2}}} ds \norm{\sigma}_{L^\infty (0, T; C^{\alpha, p})} \le \frac{C}{\nu^{\frac{1}{2}}} \sqrt{T} \norm{\sigma}_{L^\infty (0, T; C^{\alpha, p})}. 
\end{gathered}
\end{equation}
\end{proof}
%%%% G part %%%%
%%%%% Theorem 4 %%%%%
\begin{theorem}
Let $0<\alpha<1, 1<p<\infty$ and let $T>0$. There exist constants $C_1, C_2$ depending only on $\alpha$ and $\nu$, and $C_3 (T, X), C_4 (T,X)$ such that
\begin{equation}
\begin{gathered}
%\norm{\mathbb{G}(\tau \circ X^{-1} )  }_{L^\infty (0, T; C^{\alpha, p})} \le C (1 + \norm{X - \mathrm{Id} }_{Lip(0, T; C^{1+\alpha} ) } )^4 \norm{\tau}_{Lip(0, T; C^{\alpha, p} ) } (1+\frac{1}{\nu} + \frac{T}{\nu} + \frac{T^2}{\nu^3}).
\norm{\mathbb{G}(\tau \circ X^{-1} )  }_{L^\infty (0, T; C^{\alpha, p})} \le C_1 \norm{X - \mathrm{Id}}_{Lip(0, T; C^{1+\alpha})} ^\alpha \norm{\tau(0)}_{\alpha, p} (1 + C_3 (T, X)) 
\\+ C_2 \norm{\tau}_{Lip(0, T; C^{\alpha, p})} C_4 (T,X)
\end{gathered}
\end{equation} 
where $C_3 (T,X)$ and $C_4 (T,X)$ are of the form $C T^{\frac{1}{2}} \left (\norm{X - \mathrm{Id}}_{Lip(0, T; C^{1+\alpha})}^{\alpha} + \norm{X - \mathrm{Id}}_{Lip(0, T; C^{1+\alpha})}^4 \right ) $. 
%to a function in $T$ which vanishes as $T^{\frac{1}{2}}$ as $T \rightarrow 0$. 
%Also, the term $\norm{\tau}_{Lip(0, T; C^{\alpha, p})}$ can be replaced by $\norm{\tau}_{C^{\beta} (0, T; C^{\alpha, p})}$, by corresponding change of $C_4$, still satisfying the condition prescribed above.
\label{G}
\end{theorem}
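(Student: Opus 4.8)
The plan is to exploit the decomposition of $\mathbb{G}$ into the order-zero operator $\mathbb{H}=\mathbb{I}+R\otimes R$, which is bounded on $C^{\alpha,p}$, and the smoothing convolution kernel $K_r=\nabla\mathrm{div}\,g_{\nu r}$. Writing $\sigma=\tau\circ X^{-1}$ and $\mathbb{G}(\sigma)(t)=\mathbb{H}\int_0^t K_{t-s}*\sigma(s)\,ds$, it suffices to bound the inner integral in $C^{\alpha,p}$ uniformly in $t\le T$. The kernel is a rescaling $K_r(z)=(\nu r)^{-d/2-1}\Phi(z/\sqrt{\nu r})$ of a fixed Schwartz profile $\Phi$ with $\int\Phi=0$ and vanishing first moments, so that $\norm{\nabla^j K_r}_{L^1}\le C(\nu r)^{-1-j/2}$. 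The obstruction is that $\norm{K_r}_{L^1}\sim(\nu r)^{-1}$ is not integrable in $r$, and, as stressed in the introduction, $s\mapsto\sigma(s)$ is not continuous into $C^\alpha$, so one cannot pull a factor $|t-s|$ out of $\sigma(s)-\sigma(t)$ in the $C^\alpha$ seminorm.

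First I would split $\tau(s)=\tau(t)+(\tau(s)-\tau(t))$, producing $\sigma(s)=A(s)+B(s)$ with $A(s)=\tau(t)\circ X^{-1}(s)$ and $B(s)=(\tau(s)-\tau(t))\circ X^{-1}(s)$. The increment term $B(s)$ is harmless: by Theorem \ref{p}, $[B(s)]_\alpha\le M_X^\alpha\norm{\tau(s)-\tau(t)}_{C^\alpha}\le M_X^\alpha|t-s|\norm{\tau}_{Lip(0,T;C^\alpha)}$, and the elementary bound $[K_r*h]_\alpha\le\norm{K_r}_{L^1}[h]_\alpha\le C(\nu r)^{-1}[h]_\alpha$ gives $\int_0^t(\nu(t-s))^{-1}|t-s|\,ds\,\norm{\tau}_{Lip}M_X^\alpha$, an integral that converges and contributes the $C_2\norm{\tau}_{Lip}C_4(T,X)$ term; the accompanying $L^p$ and $L^\infty$ bounds follow the same way via Young's inequality. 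For the main term $A(s)$ I would freeze the diffeomorphism as well, writing $A(s)=\sigma(t)+(A(s)-\sigma(t))$ with $\sigma(t)=\tau(t)\circ X^{-1}(t)$. The frozen part collapses to $\big(\int_0^t K_r\,dr\big)*\sigma(t)$; since $\int_0^t K_r\,dr=\nabla\mathrm{div}\int_0^t g_{\nu r}\,dr$ is the Hessian of a truncated Newtonian potential, it equals a time-independent Calder\'on--Zygmund kernel plus a smooth $L^1$ remainder, so $\mathbb{H}$ times this operator is bounded on $C^{\alpha,p}$ and produces $C\norm{\sigma(t)}_{C^{\alpha,p}}\le CM_X^\alpha\norm{\tau(t)}_{C^{\alpha,p}}$. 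This is exactly the time-independent estimate of \cite{MR3660694}, and after bounding $\norm{\tau(t)}_{\alpha,p}\le\norm{\tau(0)}_{\alpha,p}+t\norm{\tau}_{Lip}$ it furnishes the $\norm{\tau(0)}_{\alpha,p}$ term.

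The remaining piece $A(s)-\sigma(t)$ carries the genuine difficulty and is where I expect the \emph{main obstacle}. After the volume-preserving change of variables $y=X(s,b)$, one has $(A(s)-\sigma(t))(x)=\int\big(\Phi_x\circ X(s)-\Phi_x\circ X(t)\big)(b)\,\tau(t,b)\,db$ with $\Phi_x(\cdot)=K_{t-s}(x-\cdot)$ Schwartz. The key is the Lipschitz-in-time dependence in $L^1$ of a Schwartz function composed with the smoothly varying near-identity diffeomorphism, $\norm{\Phi_x\circ X(s)-\Phi_x\circ X(t)}_{L^1}\le C|t-s|\,\norm{\partial_t X}_{L^\infty}\norm{\nabla K_{t-s}}_{L^1}$, which I would prove by writing the difference as $\int_s^t\nabla\Phi_x(X(r,\cdot))\cdot\partial_r X(r,\cdot)\,dr$. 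Crucially this gains a full power $|t-s|$ rather than the $|t-s|^\alpha$ available from $C^\alpha$ composition. Bounding $A(s)-\sigma(t)$ in $L^\infty$ then costs $|t-s|(\nu(t-s))^{-3/2}$, integrable and yielding a $T^{1/2}$ factor; for the $C^\alpha$ seminorm I would apply the same lemma to the Schwartz function $\Phi_x-\Phi_{x'}$ and interpolate $\norm{\nabla(\Phi_x-\Phi_{x'})}_{L^1}\le|x-x'|^\alpha\norm{\nabla K_{t-s}}_{L^1}^{1-\alpha}\norm{\nabla^2 K_{t-s}}_{L^1}^\alpha$, producing the weight $|t-s|(\nu(t-s))^{-3/2-\alpha/2}$, i.e. $(t-s)^{-1/2-\alpha/2}$, integrable precisely because $\alpha<1$.

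The chain rule and the change of variables back and forth introduce Jacobian factors $\norm{\nabla X}_{L^\infty},\norm{\nabla X^{-1}}_{L^\infty}\le M_X$; tracking these through both the $L^\infty$ and the seminorm estimates, together with the $L^p$ companion bounds obtained by the same mechanism, accounts for the polynomial dependence $\norm{X-\mathrm{Id}}_{Lip}^\alpha+\norm{X-\mathrm{Id}}_{Lip}^4$ in $C_3,C_4$. Assembling the frozen contribution (first term) with the $B$ and $A-\sigma(t)$ contributions (second term) and using $\norm{\tau(t)}_{\alpha,p}\le\norm{\tau(0)}_{\alpha,p}+T\norm{\tau}_{Lip}$ yields the claimed estimate. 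The hardest point is exactly the seminorm estimate of $A(s)-\sigma(t)$: it is the step that fails under the naive time-regularity argument of \cite{c1}, and that forces the use of the $L^1$-Lipschitz-kernel mechanism together with the cancellation moments of $K_r$.
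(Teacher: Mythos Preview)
Your proposal is correct and follows essentially the same route as the paper: your splitting into the frozen-time term, $B(s)$, and $A(s)-\sigma(t)$ coincides with the paper's splitting into the telescoped term, $\Delta_1\tau$, and $\Delta_2\tau$, and the $L^1$-Lipschitz-kernel mechanism you invoke is exactly Lemma~\ref{Ktype}. The only minor differences are cosmetic (the paper factors $\mathbb{G}=(R\otimes R)\mathbb{H}\Gamma$ so that the frozen part telescopes to $\sigma(t)-g_{\nu t}*\sigma(t)$ without having to identify a CZ kernel) and in the H\"older-seminorm estimate for the $\Delta_2$ piece, where the paper splits the $s$-integral at $t-|h|$ and uses $\norm{\Delta_2\tau(s,t)}_{L^\infty}\le|t-s|^\alpha\norm{X-\mathrm{Id}}_{Lip}^\alpha[\tau(t)]_\alpha$ rather than your $L^1$ interpolation on $\norm{\nabla(\Phi_x-\Phi_{x'})}_{L^1}$; both arguments are valid.
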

\begin{proof}
Since $\mathbb{G} = (R \otimes R) \mathbb{H} \Gamma$ where
\begin{equation}
\Gamma (\tau \circ X^{-1} ) = \int_0 ^t \Delta g_{\nu (t-s) } * (\tau \circ X^{-1} (s) ) ds, \label{Gamma}
\end{equation}
we can replace $\mathbb{G}$ by $\Gamma$. Then $\Gamma (\tau \circ X^{-1} )$ can be written as
\begin{equation}
\begin{gathered}
\Gamma (\tau \circ X^{-1}) (t) = \int_0 ^t \Delta g_{\nu (t-s)} * \left ( \left ( \tau \circ X^{-1} \right ) (s) - \left (\tau \circ X^{-1} \right ) (t) \right ) ds \\ + \int_0 ^t \Delta g_{\nu (t-s) } * \left (\tau \circ X^{-1} \right ) (t) ds.
\end{gathered}
\end{equation}
But
\begin{equation}
\begin{gathered}
\int_0 ^t \Delta g_{\nu (t-s) }*( \tau \circ X^{-1} )(t) ds = \tau \circ X^{-1} (t) - g_{\nu t} * (\tau \circ X^{-1}) (t)
\end{gathered}
\end{equation}
so the second term is bounded by $2\norm{\tau}_{L^\infty (0, T; C^{\alpha, p})} M_X ^{\alpha} $ by Theorem \ref{p}. Now we let
\begin{equation}
\tau \circ X^{-1} (x,s) - \tau \circ X^{-1} (x,t) = \Delta_1 \tau (x,s,t) + \Delta_2 \tau (x,s,t),
\end{equation}
where
\begin{equation}
\begin{gathered}
\Delta_1 \tau (x,s,t) = \tau (X^{-1} (x,s), s) - \tau (X^{-1} (x,s), t), \\
\Delta_2 \tau (x,s,t) = \tau (X^{-1} (x,s), t) - \tau (X^{-1} (x,t), t).
\end{gathered} \label{Deltas}
\end{equation}
But since 
\begin{equation}
\begin{gathered}
\norm{\Delta_1 \tau (s,t) }_{C^{\alpha, p}}  \le |t-s| M_X ^{\alpha} \norm{\tau}_{Lip(0, T; C^{\alpha, p})} , \\
%\norm{\Delta_1 \tau (s,t) }_{C^{\alpha, p}}  \le |t-s|^\beta M_X ^{\alpha} \norm{\tau}_{C^\beta (0, T; C^{\alpha, p})}
\end{gathered}
\end{equation}
by the proof of Theorem \ref{p} we get
\begin{equation}
\begin{gathered}
\norm{ \int_0 ^t \Delta g_{\nu (t-s)} * \Delta_1 \tau (s,t) ds }_{\alpha, p} \le \frac{Ct}{\nu}  \norm{\tau}_{Lip(0, T; C^{\alpha, p})} M_X ^{\alpha}, \\
%\norm{ \int_0 ^t \Delta g_{\nu (t-s)} * \Delta_1 \tau (s,t) ds }_{\alpha, p} \le \frac{Ct^\beta }{\nu}  \norm{\tau}_{C^\beta (0, T; C^{\alpha, p})} M_X ^{\alpha}.
\end{gathered} 
\end{equation}
On the other hand,
\begin{equation}
\int_0 ^t \Delta g_{\nu (t-s)}* \Delta_2 \tau (s,t) ds = \int_ 0 ^t \int_{\mathbb{R}^d} K(x,z,t,s) \tau(z,t) dz ds,
\end{equation}
where
\begin{equation}
K(x,z,t,s) = \Delta g_{\nu (t-s)} (x - X(z,s)) - \Delta g_{\nu (t-s) } (x - X(z,t)). \label{K}
\end{equation}
We use the following lemma.
\begin{lemma} \label{Ktype}
$K(x,z,t,s)$ is $L^1$ in both the $x$ variable and the $z$ variable, and
\begin{equation}
\sup_{z } \norm{K(\cdot, z, t, s )}_{L^1 }, \sup_{x} \norm{K (x, \cdot, t,s ) } _{L^1} \le  \frac{C \norm{X - \mathrm{Id}}_{Lip(0, T; L^\infty)}  } {|t-s|^{\frac{1}{2}} \nu^{\frac{3}{2}}}.
%\sup_{z } \norm{K(\cdot, z, t, s )}_{L^1 }, \sup_{x} \norm{K (x, \cdot, t,s ) } _{L^1} \le  \frac{C \norm{X - \mathrm{Id}}_{Lip(0, T; C^{1+\alpha})} (1+ \norm{X - \mathrm{Id}}_{Lip(0, T; C^{1+\alpha})} ^3 \frac{|t-s|^\frac{3}{2}}{\nu^\frac{3}{2}} ) } {|t-s|^{\frac{1}{2}} \nu^{\frac{3}{2}}}.
\end{equation}
\end{lemma}
\begin{proof}
We define
\begin{equation}
S(x) = 4 \pi e^{-|x|^2} \left ( |x|^2 -\frac{d}{2} \right ) 
\end{equation}
so that
\begin{equation}
(\Delta g_{\nu (t-s)} ) =  (4\pi\nu(t-s))^{-(\frac{d}{2}+1)} S \left (\frac{x}{(4(t-s))^{\frac{1}{2}}} \right ).
\end{equation}
Then
\begin{equation}
\begin{gathered}
\int |K(x, z, t, s)|dz = \int (4\pi \nu (t-s))^{-\left (\frac{d}{2} + 1 \right )} \left |  S(\frac{x-X(z,s)}{(4 \nu (t-s))^{\frac{1}{2}}}) - S(\frac{x-X(z,t)}{(4 \nu (t-s))^{\frac{1}{2}}})  \right | dz \\
= \int (4\pi \nu (t-s))^{- \left ( \frac{d}{2} + 1 \right ) } \left |  S(\frac{x-y}{(4 \nu(t-s))^{\frac{1}{2}}}) - S(\frac{x-X(y,t-s)}{(4 \nu (t-s))^{\frac{1}{2}}})  \right | dy \\
= \left ( 4 \pi \nu (t-s) \right )^{-1} \pi^{ - \left ( \frac{d}{2} + 1\right )} \int \left |  S(u) - S \left ( u - \frac{(X - \mathrm{Id}) (x - (4(t-s))^{\frac{1}{2}}u, t-s)}{(4 \nu(t-s))^{\frac{1}{2}}} \right )  \right | du.
\end{gathered}
\end{equation}
However, for each $u$
\begin{equation}
\begin{gathered}
\left |  S(u) - S \left (u - \frac{(X - \mathrm{Id}) (x - (4 \nu(t-s))^{\frac{1}{2}}u, t-s)}{(4 \nu (t-s))^{\frac{1}{2}}} \right )  \right |  \le \left | \frac{(X- \mathrm{Id})(x- (4 \nu(t-s))^{\frac{1}{2}}u, t-s)}{(4 \nu (t-s))^{\frac{1}{2}}} \right | \\ \times \sup  \left \lbrace |\nabla S (u - z) | : {|z| \le \left | \frac{(X- \mathrm{Id})(x- (4 \nu (t-s))^{\frac{1}{2}}u, t-s)}{(4 \nu (t-s))^{\frac{1}{2}}} \right |} \right  \rbrace
\end{gathered}
\end{equation}
and we have
\begin{equation}
\left | \frac{(X- \mathrm{Id})(x- (4 \nu (t-s))^{\frac{1}{2}}u, t-s)}{(4 \nu (t-s))^{\frac{1}{2}}} \right | \le \norm{(X-\mathrm{Id})}_{Lip(0, T; L^\infty)} \frac{|t-s|^{\frac{1}{2}}}{\nu^{\frac{1}{2} } } \le CT^{\frac{1}{2}}
\end{equation}
and obviously 
\begin{equation}
\tilde{S} (u) = \sup_{z \le C T ^{\frac{1}{2}}} |(\nabla S) ( u-z)|
\end{equation}
is integrable in $\mathbb{R}^d$; because $\nabla S$ is Schwartz, 
\begin{equation}
\begin{gathered}
|(\nabla S) (x)| \le \frac{C_d}{(1+ 2C^2 T + |x|^2 )^d}
\end{gathered}
\end{equation}
for some constant $C_d$, but if $|z| \le CT^{\frac{1}{2}}$, then $|u-z|^2 \ge |u|^2 - C^2 T$ and
\begin{equation}
|(\nabla S)(u-z)| \le \frac{C_d}{(1+C^2 T + |u|^2)^d}
\end{equation}
and the right side of above is clearly integrable with bound depending only on $d$ and $T$. Therefore, we have
\begin{equation}
\int |K(x, z, t, s)|dz \le |t-s|^{- \frac{1}{2}} \nu^{-\frac{3}{2} } \norm{(X- \mathrm{Id})}_{Lip(0, T; L^\infty)} C(d,T).
\end{equation}
Similarly, 
\begin{equation}
\begin{gathered}
\int |K(x, z, t, s)| dx = \int (4\pi \nu (t-s))^{- \left (\frac{d}{2} + 1 \right )} \left |  S(\frac{x-X(z,s)}{(4 \nu (t-s))^{\frac{1}{2}}}) - S(\frac{x-X(z,t)}{(4 \nu (t-s))^{\frac{1}{2}}})  \right | dx \\
= \int  \left ( 4 \pi \nu (t-s) \right )^{-1} \pi^{ - \left ( \frac{d}{2} + 1\right )} \left |  S(y) - S( y + \frac{X(z,s)-X(z,t)}{(4 \nu (t-s))^{\frac{1}{2}}})  \right | dy
\end{gathered}
\end{equation}
and again we have
\begin{equation}
\left | \frac{X(z,s)-X(z,t)}{(4 \nu (t-s))^{\frac{1}{2}}} \right | \le \norm{(X-\mathrm{Id})}_{Lip(0, T; L^\infty)} |t-s|^{\frac{1}{2}} \nu^{-\frac{1}{2} } \le C T^{\frac{1}{2}}.
\end{equation}
Therefore, we have the bound
\begin{equation}
\int |K(x,z)|dx \le |t-s|^{- \frac{1}{2}} \nu^{-\frac{3}{2} } \norm{(X- \mathrm{Id})}_{Lip(0, T; L^\infty)} C(d,T).
\end{equation}
\end{proof}
From Lemma \ref{Ktype} and generalized Young's inequality, we have 
\begin{equation}
\begin{gathered}
\norm{ \int_0 ^t \Delta g_{\nu (t-s)} * \Delta_2 \tau (s,t) ds }_{L^p \cap L^\infty}  \le  \frac{C}{\nu}  \left ( \left (\frac{t}{\nu} \right)^{\frac{1}{2}} \norm{X - \mathrm{Id}}_{Lip(0, T; C^{1+\alpha})}  \right ) \norm{\tau}_{L^\infty (0, T; L^p \cap L^\infty )} .
%\norm{ \int_0 ^t \Delta g_{\nu (t-s)} * \Delta_2 \tau (s,t) ds }_{L^p \cap L^\infty}  \\ \le  \frac{C}{\nu}  \left ( \left (\frac{t}{\nu} \right)^{\frac{1}{2}} \norm{X - \mathrm{Id}}_{Lip(0, T; C^{1+\alpha})}  + \left ( \frac{t}{\nu} \right )^2 \norm{X - \mathrm{Id}}_{Lip(0, T; C^{1+\alpha})} ^4  \right ) \norm{\tau}_{L^\infty (0, T; L^p \cap L^\infty )} .
\end{gathered}
\end{equation}
For the H\"{o}lder seminorm, we measure the finite difference. Let us denote $\delta_h f (x, t) = f(x+h, t) - f(x,t)$. If $|h| < t$, then 
\begin{equation}
\delta_h \left ( \int_0 ^t \Delta g_{\nu (t-s)} * \Delta_2 \tau (s,t) ds \right ) = \int_0 ^t \delta_h  (\Delta g_{\nu (t-s) } ) * \Delta_2 \tau (s,t) ds.
\end{equation}
If $0 < t-s < |h|$, then $\norm{\delta_h \Delta g_{\nu (t-s)} }_{L^1} \le 2 \norm{\Delta g_{\nu (t-s)} }_{L^1} \le \frac{C}{\nu (t-s)}$ and since
\begin{equation}
\norm{\Delta_2 \tau (s,t)}_{L^\infty} \le |t-s|^{\alpha} \norm{X - \mathrm{Id}}_{Lip(0, T; C^{1+\alpha})} ^{\alpha} \norm{\tau}_{L^\infty (0, T; C^{\alpha, p})}
\end{equation}
we have
\begin{equation}
\norm{\int_{t-|h|} ^{t} \delta_h (\Delta g_{\nu (t-s)} ) * \Delta_2 \tau (s,t) ds }_{L^\infty } \le \frac{C}{\nu \alpha} |h|^{\alpha} \norm{X - \mathrm{Id}}_{Lip(0, T; C^{1+\alpha})} ^{\alpha} \norm{\tau}_{L^\infty (0, T; C^{\alpha, p})}.
\end{equation}
%%%% For this reason, we cannot relax the Lipschitz in time of $X- \mathrm{Id}$ to H\"{o}lder in time- then cannot get $C^{\alpha}$- seminorm. %%%%
If $|h| < t-s < t$, then following lines of Lemma \ref{Ktype} $\delta_h (\Delta g_{\nu (t-s)})$ is a $L^1$ function with
\begin{equation}
\norm{\delta_h (\Delta g_{\nu (t-s)})}_{L^1} \le \frac{C |h| } {(\nu (t-s))^{\frac{3}{2}} } \label{Lapga}
%\norm{\delta_h (\Delta g_{\nu (t-s)})}_{L^1} \le \frac{C |h| } {(\nu (t-s))^{\frac{3}{2}} } \left (1 + \left (\frac{|h|}{( \nu (t-s) ) ^{\frac{1}{2}}} \right) ^3 \right ), \label{Lapga}
\end{equation}
and we have
\begin{equation}
\begin{gathered}
\norm {\int_0 ^{t-|h|} \delta_h (\Delta g_{\nu (t-s)} ) * \Delta_2 \tau (s,t) ds }_{L^\infty }  \\ \le 
\begin{cases}
\frac{C}{\nu^{\frac{3}{2}}}  \norm{X- \mathrm{Id}}_{Lip(0, T; C^{1+\alpha})} ^\alpha \norm{\tau}_{L^\infty (0, T; C^{\alpha, p})} |h|^{\frac{1}{2}} \frac{t^{\alpha}}{\alpha} & \alpha \le \frac{1}{2}, \\
\frac{C}{\nu^{\frac{3}{2}}}  \norm{X- \mathrm{Id}}_{Lip(0, T; C^{1+\alpha})} ^\alpha \norm{\tau}_{L^\infty (0, T; C^{\alpha, p})} |h| \frac{t^{\alpha - \frac{1}{2}}}{\alpha - \frac{1}{2}}  &\alpha > \frac{1}{2}.
\end{cases} 
%\norm {\int_0 ^{t-|h|} \delta_h (\Delta g_{\nu (t-s)} ) * \Delta_2 \tau (s,t) ds }_{L^\infty }  \\ \le 
%\begin{cases}
%\frac{C}{\nu^{\frac{3}{2}}} \left (1 + \left ( \frac{|h|}{\nu} \right)^{\frac{3}{2}} \right ) \norm{X- \mathrm{Id}}_{Lip(0, T; C^{1+\alpha})} ^\alpha \norm{\tau}_{L^\infty (0, T; C^{\alpha, p})} |h|^{\frac{1}{2}} \frac{t^{\alpha}}{\alpha} & \alpha \le \frac{1}{2}, \\
%\frac{C}{\nu^{\frac{3}{2}}} \left (1 + \left ( \frac{|h|}{\nu} \right)^{\frac{3}{2}} \right ) \norm{X- \mathrm{Id}}_{Lip(0, T; C^{1+\alpha})} ^\alpha \norm{\tau}_{L^\infty (0, T; C^{\alpha, p})} |h| \frac{t^{\alpha - \frac{1}{2}}}{\alpha - \frac{1}{2}}  &\alpha > \frac{1}{2}.
%\end{cases} 
\end{gathered} 
\end{equation}
If $|h| \ge t$, then we only have the first term. Therefore, we have
\begin{equation}
\begin{gathered}
\frac{1}{|h|^{\alpha} } \norm{\delta_h \left ( \int_0 ^t \Delta g_{\nu (t-s) } * \Delta_2 \tau (s,t) ds \right ) } _{L^\infty}  \le  \frac{C(\alpha)}{\nu} \norm{X- \mathrm{Id}}_{Lip(0, T; C^{1+\alpha})} ^\alpha \norm{\tau}_{L^\infty (0, T; C^{\alpha, p})} .
%\frac{1}{|h|^{\alpha} } \norm{\delta_h \left ( \int_0 ^t \Delta g_{\nu (t-s) } * \Delta_2 \tau (s,t) ds \right ) } _{L^\infty} \\ 
%\le  \frac{C(\alpha)}{\nu} \left ( 1 + \left ( \frac{t}{\nu } \right )^{\frac{1}{2}} + \left ( \frac{t}{\nu } \right )^2   \right )\norm{X- \mathrm{Id}}_{Lip(0, T; C^{1+\alpha})} ^\alpha \norm{\tau}_{L^\infty (0, T; C^{\alpha, p})} .
\end{gathered}
\end{equation}
We note that
\begin{equation}
\begin{gathered}
\norm{\tau (t) }_{\alpha, p} \le \norm{\tau(0) }_{\alpha, p} + t\norm{\tau}_{Lip(0, T; C^{\alpha, p})}. \\
%\norm{\tau (t) }_{\alpha, p} \le \norm{\tau(0) }_{\alpha, p} + t^\beta \norm{\tau}_{C^\beta (0, T; C^{\alpha, p})}.
\end{gathered}
\label{tauLip}
\end{equation}
To summarize, we have
\begin{equation}
\begin{gathered}
%C \norm{\tau}_{Lip(0, T; C^{\alpha, p} )} [(\frac{t}{\nu} + 1)(1+ \norm{X - \mathrm{Id}}_{L^\infty (0, T; C^{1+\alpha})} )^{\alpha} ] + C(\alpha) \norm{\tau}_{L^\infty (0, T; C^{\alpha, p})} [\frac{t^\frac{1}{2}}{\nu^{\frac{3}{2}}} \norm{X - \mathrm{Id}}_{Lip(0, T; C^{1+\alpha})} + \frac{t^2}{\nu^3} \norm{X - \mathrm{Id}}_{Lip(0, T; C^{1+\alpha})} ^4 + (\frac{1}{\nu} + \frac{t^{\frac{1}{2}}}{\nu^{\frac{3}{2}}} + \frac{t^2}{\nu^3}) \norm{X - \mathrm{Id}}_{Lip(0, T; C^{1+\alpha})} ^\alpha ]
\norm{\Gamma (\tau \circ X^{-1} ) }_{L^\infty (0, T; C^{\alpha, p})}  
\\ \le C(\alpha) \left (1 + \frac{1}{\nu}  \right ) \norm{X - \mathrm{Id}}_{Lip(0, T; C^{1+\alpha})} ^\alpha \norm{\tau(0) }_{\alpha, p}  + C(\alpha)  \left ( 1 + \frac{1}{\nu} \right ) \norm{X - \mathrm{Id}}_{Lip(0, T; C^{1+\alpha})} ^{\alpha} T \norm{\tau}_{Lip(0,T; C^{\alpha, p})}
\\ + \frac{C(\alpha)}{\nu}  \left (\frac{T}{\nu} \right ) ^{\frac{1}{2}}  \max \{ \norm{X - \mathrm{Id}}_{Lip(0, T; C^{1+\alpha})}^{\alpha}, \norm{X - \mathrm{Id}}_{Lip(0, T; C^{1+\alpha})} ^4 \} (\norm{\tau(0)}_{\alpha, p} + T \norm{\tau}_{Lip(0, T; C^{\alpha, p})} ),
%\\ + \frac{C(\alpha)}{\nu} \max \left \{ \left (\frac{T}{\nu} \right ) ^{\frac{1}{2}} , \left (\frac{T}{\nu} \right ) ^2\right \} \max \{ \norm{X - \mathrm{Id}}_{Lip(0, T; C^{1+\alpha})}^{\alpha}, \norm{X - \mathrm{Id}}_{Lip(0, T; C^{1+\alpha})} ^4 \}
% \\ (\norm{\tau(0)}_{\alpha, p} + T \norm{\tau}_{Lip(0, T; C^{\alpha, p})} ),
\end{gathered}
\end{equation}
and this completes the proof.
\end{proof}
%%%% G part %%%%
\begin{theorem}
Let $0 < \alpha < 1, 1 < p < \infty$ and let $T>0$. Let $X' \in Lip (0, T; C^{1+\alpha})$ with $\partial_t X' \in L^\infty (0, T; C^{1+\alpha})$. There exists a constant $C$ such that
\begin{equation}
\begin{gathered}
% \norm{ \left [ \eta \cdot \nabla, \mathbb{U} \right ] (\sigma) }_{L^\infty (0, T; C^{\alpha, p})} \le C \left ( \left ( \frac{T} {\nu} \right )^{\frac{1}{2}} \norm{\eta}_{L^\infty (0, T; C^{1+\alpha})} + \frac{T} {\nu} \norm{\eta}_{Lip(0, T; C^{\alpha})} \right ) \norm{\sigma}_{L^\infty (0, T; C^{\alpha, p})}
\norm{ \left [ X' \circ X^{-1} \cdot \nabla, \mathbb{U} \right ] (\sigma) }_{L^\infty (0, T; C^{\alpha, p})} \\
 \le C \left ( \left ( \frac{T} {\nu} \right )^{\frac{1}{2}}  + \frac{T} {\nu}  \norm{X - \mathrm{Id}}_{Lip(0,T; C^{1+\alpha})} \right )M_X ^{1+3\alpha} \norm{X'}_{Lip(0,T;C^{1+\alpha})}   \norm{\sigma}_{L^\infty (0, T; C^{\alpha, p})}
\end{gathered}
\end{equation} \label{commU}
\end{theorem}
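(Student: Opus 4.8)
The plan is to write the commutator explicitly and split it into a \emph{frozen-time} piece, to which a fixed-time commutator estimate applies, and a \emph{time-increment} piece controlled by the Lipschitz-in-time regularity of $\eta = X'\circ X^{-1}$. Throughout I use that $X$ volume preserving forces $\eta$ to be divergence free (differentiate $\det\nabla X_\epsilon\equiv1$ in $\epsilon$), so $\eta\cdot\nabla\sigma = \mathrm{div}(\eta\otimes\sigma)$, and that $\mathbb{H}$, being of Calder\'on--Zygmund type, is bounded on $C^{\alpha,p}$. Recalling $\mathbb{U}(\sigma)(t) = \int_0^t g_{\nu(t-s)}*\mathbb{H}\,\mathrm{div}\,\sigma(s)\,ds$ and that inside $\mathbb{U}(\eta\cdot\nabla\sigma)$ the field $\eta$ is evaluated at the internal time $s$ while in $\eta\cdot\nabla\mathbb{U}(\sigma)$ it is evaluated at $t$, I would add and subtract $\eta(t)$ and write
\[
[\eta\cdot\nabla,\mathbb{U}]\sigma(t) = \underbrace{\int_0^t [\eta(t)\cdot\nabla,\ g_{\nu(t-s)}*\mathbb{H}\,\mathrm{div}\,]\,\sigma(s)\,ds}_{I} + \underbrace{\int_0^t g_{\nu(t-s)}*\mathbb{H}\,\mathrm{div}\big((\eta(t)-\eta(s))\cdot\nabla\sigma(s)\big)\,ds}_{II}.
\]

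For the increment piece $II$ I would use the divergence-free property to move both derivatives onto the Gaussian, writing the integrand as $(\nabla\nabla g_{\nu(t-s)})*\mathbb{H}\big((\eta(t)-\eta(s))\otimes\sigma(s)\big)$. Since $\mathbb{H}$ is bounded on $C^{\alpha,p}$, $\norm{\nabla\nabla g_{\nu(t-s)}}_{L^1}\le C(\nu(t-s))^{-1}$, and $C^{\alpha,p}$ is a module over $C^\alpha$, this gives a pointwise-in-$s$ bound by $C(\nu(t-s))^{-1}\norm{\eta(t)-\eta(s)}_{C^\alpha}\norm{\sigma(s)}_{\alpha,p}$. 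The point is the cancellation $\norm{\eta(t)-\eta(s)}_{C^\alpha}\le|t-s|\,\norm{\eta}_{Lip(0,T;C^\alpha)}$, which removes the $(t-s)^{-1}$ singularity; bounding $\norm{\eta}_{Lip(0,T;C^\alpha)} = \norm{X'\circ X^{-1}}_{Lip(0,T;C^\alpha)}$ by Theorem \ref{p} and integrating in $s$ produces exactly the $\frac{T}{\nu}\norm{X-\mathrm{Id}}_{Lip(0,T;C^{1+\alpha})}$ contribution in the stated estimate.

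The frozen-time piece $I$ is where the genuine commutator gain is needed. Using $\mathrm{div}\,\eta(t)=0$ and moving both the divergence of $\mathbb{H}\,\mathrm{div}$ and the gradient contracted against $\eta$ onto the Gaussian, the integrand reduces to $\mathbb{H}$ applied to the multiplication--convolution commutator $[\eta_l(t),\ (\partial_l\partial_k g_{\nu(t-s)})*]\sigma_{\cdot k}(s)$, whose kernel is $(\partial_l\partial_k g_{\nu(t-s)})(x-y)\,(\eta_l(x,t)-\eta_l(y,t))$. The factor $|\eta_l(x,t)-\eta_l(y,t)|\le\norm{\nabla\eta(t)}_{L^\infty}|x-y|$ converts the second-derivative Gaussian kernel, of $L^1$-size $(\nu(t-s))^{-1}$, into a kernel of $L^1$-size $\norm{\,|z|\,\nabla\nabla g_{\nu(t-s)}}_{L^1}\le C(\nu(t-s))^{-1/2}$, the decisive half-power gain. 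The $L^p\cap L^\infty$ part of the commutator bound then follows from the generalized Young inequality (Lemma \ref{GenY}), with the two balanced kernel integrals controlled as above.

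The main obstacle is the H\"older ($C^\alpha$) seminorm of this frozen-time commutator with the sharp scaling $(\nu(t-s))^{-1/2}$. I expect to get it exactly as in the proof of Theorem \ref{G}: estimate the finite difference $\delta_h$ of the commutator by splitting the $s$-integral at the scale $|h|\sim(\nu(t-s))^{1/2}$ and using the mean-zero property of $\nabla\nabla g_{\nu(t-s)}$ to rewrite $(\nabla\nabla g)*\sigma = (\nabla\nabla g)*(\sigma(\cdot)-\sigma(x))$, trading kernel size against $[\sigma]_\alpha$ and powers of $|h|$. Collecting the two regimes yields $\norm{[\eta(t)\cdot\nabla,\ g_{\nu(t-s)}*\mathbb{H}\,\mathrm{div}\,]\sigma(s)}_{\alpha,p}\le C(\nu(t-s))^{-1/2}\norm{\eta(t)}_{C^{1+\alpha}}\norm{\sigma(s)}_{\alpha,p}$; integrating $(\nu(t-s))^{-1/2}$ in $s$ gives the $(T/\nu)^{1/2}$ factor, while bounding $\norm{\eta}_{L^\infty(0,T;C^{1+\alpha})}$ via Theorem \ref{p} accounts for the $M_X^{1+3\alpha}\norm{X'}_{Lip(0,T;C^{1+\alpha})}$ factor (using $M_X\ge1$ to unify the exponent with that of piece $II$). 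Adding the bounds for $I$ and $II$ gives the claimed inequality.
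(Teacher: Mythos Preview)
Your decomposition into a frozen-time piece $I$ and a time-increment piece $II$ matches the paper's, and your treatment of $II$ is exactly the paper's third term (with Theorem \ref{p} supplying the bound on $\norm{\eta}_{Lip(0,T;C^\alpha)}$). Two points deserve correction, however.

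First, your claim that the integrand of $I$ ``reduces to $\mathbb{H}$ applied to the multiplication--convolution commutator $[\eta_l(t),(\partial_l\partial_k g_{\nu(t-s)})*]\sigma_{\cdot k}$'' is not literally true: since $\mathbb{H}$ does not commute with multiplication by $\eta$, one has
\[
[\eta(t)\cdot\nabla,\, g_{\nu(t-s)}*\mathbb{H}\,\mathrm{div}]\;=\;[\eta(t)\cdot\nabla,\mathbb{H}]\,\big(g_{\nu(t-s)}*\,\mathrm{div}\big)\;+\;\mathbb{H}\,[\eta(t)\cdot\nabla,\,g_{\nu(t-s)}*\,\mathrm{div}],
\]
and you have only described the second summand. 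The first summand is precisely what the paper isolates as its opening term and bounds via Lemma \ref{ComEs1}: since $\norm{g_{\nu(t-s)}*\mathrm{div}\,\sigma(s)}_{\alpha,p}\le C(\nu(t-s))^{-1/2}\norm{\sigma(s)}_{\alpha,p}$, the Calder\'on--Zygmund commutator lemma gives the required $(T/\nu)^{1/2}$ contribution. Without this term your reduction is incomplete. Relatedly, the hypothesis $\nabla\cdot\eta=0$ is not part of the theorem's statement (and in the contraction argument $X_\epsilon$ is a linear interpolation, hence not volume preserving); the paper keeps the $(\nabla\cdot\eta)\sigma$ term explicitly as its second piece, which is trivially bounded by $(T/\nu)^{1/2}\norm{\eta}_{L^\infty(0,T;C^{1+\alpha})}\norm{\sigma}_{L^\infty(0,T;C^{\alpha,p})}$.

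Second, for the H\"older seminorm of the Gaussian commutator $[\eta_l(t),(\partial_l\partial_k g_{\nu(t-s)})*]\sigma$ you propose a finite-difference splitting of the $s$-integral at the scale $|h|$, as in Theorem \ref{G}. This would work, but it is more laborious than necessary and is not what the paper does. The paper instead writes the commutator via the mean value theorem for $\eta$,
\[
\int_{\mathbb{R}^d}\nabla\nabla g_{\nu(t-s)}(z)\,z\cdot\Big(\int_0^1\nabla\eta(x-(1-\lambda)z,t)\,d\lambda\Big)\,\sigma(x-z,s)\,dz,
\]
observes that $z\mapsto z\,\nabla\nabla g_{\nu(t-s)}(z)$ has $L^1$-norm $\le C(\nu(t-s))^{-1/2}$, and notes that the integrand, as a function of $x$, lies in $C^{\alpha,p}$ uniformly in $z$ with norm $\le\norm{\eta(t)}_{C^{1+\alpha}}\norm{\sigma(s)}_{\alpha,p}$ (because $\nabla\eta\in C^\alpha$ and $\sigma\in C^{\alpha,p}$). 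This gives the full $C^{\alpha,p}$ bound $C(\nu(t-s))^{-1/2}\norm{\eta(t)}_{C^{1+\alpha}}\norm{\sigma(s)}_{\alpha,p}$ directly, without any finite-difference analysis.
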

%%%% commutator of U %%%%
\begin{proof}
First, we denote
\begin{equation}
\begin{gathered}
\eta = X' \circ X^{-1}.
\end{gathered}
\end{equation}
Then we have 
\begin{equation}
\begin{gathered}
\left [ \eta \cdot \nabla, \mathbb{U} \right ] (\sigma) (t)
 \\ = \eta(t) \cdot \nabla \int_0 ^t g_{\nu (t-s)} * \mathbb{H} \mathrm{div}\, \sigma(s) ds -\int_0 ^t g_{\nu (t-s)} * \mathbb{H} \mathrm{div} \, (\eta(s) \cdot \nabla \sigma (s) ) ds
 \\ = \left [\eta(t) \cdot \nabla, \mathbb{H} \right ] \int_0 ^t g_{\nu (t-s) } * \mathrm{div}\, \sigma(s) ds + \mathbb{H} \int_0 ^t (\nabla g_{\nu (t-s)} )* \left ( \nabla \cdot \eta (s) \sigma (s) \right ) ds \\
 - \mathbb{H} \int_0 ^t (\nabla \nabla g_{\nu (t-s) } ) * \left ( \eta (s) - \eta (t) \right ) \sigma (s) ds
\\ + \mathbb{H} \int_0 ^t \left ( \eta(t) \cdot (\nabla \nabla g_{\nu (t-s)} ) * \sigma(s) - (\nabla \nabla g_{\nu (t-s) } ) * (\eta(t) \sigma (s) ) \right ) ds,
\end{gathered}
\end{equation}
where $ ( \nabla \nabla g_{\nu (t-s) } ) * (\eta(s) - \eta(t) ) \sigma (s)$,  $\eta(t) \cdot (\nabla \nabla g_{\nu (t-s)} ) * \sigma(s)$, and $(\nabla \nabla g_{\nu (t-s) } ) * (\eta(s) \sigma (s) ) $ represent 
\begin{equation}
\begin{gathered}
\sum_{i,j} (\partial_i \partial_j g_{\nu (t-s) } * ) (\eta_i (s) - \eta_i (t) ) \sigma_{jk } (s),  \\
\sum_{i,j} \eta_i (t) \left ( \partial_i \partial_j g_{\nu (t-s)}  \right )* \sigma_{jk} (s) , \,\, \mathrm{and \;respectively} \sum_{i,j} \left ( \partial_i \partial_j g_{\nu (t-s) } \right ) * (\eta_i (s) \sigma_{jk} (s) ).
\end{gathered}
\end{equation}
The first term is bounded by Lemma \ref{ComEs1} and the second term is estimated directly
\begin{equation}
\begin{gathered}
\norm{ \left [ \eta(t) \cdot \nabla, \mathbb{H} \right ] \int_0 ^t g_{\nu (t-s)} * \mathrm{div} \sigma (s) ds } _{\alpha, p} \le C \norm{\eta (t)}_{C^{1+\alpha}} \left (\frac{t}{\nu} \right )^{\frac{1}{2}} \norm{\sigma}_{L^\infty (0, T; C^{\alpha, p})}, \\
\norm{ \mathbb{H} \int_0 ^t (\nabla g_{\nu(t-s)} ) * (\nabla \cdot \eta (s) \sigma (s) ) ds }_{\alpha, p}  \le C \left ( \frac{t}{\nu} \right )^{\frac{1}{2}} \norm{\eta}_{L^\infty (0, T; C^{1+\alpha})} \norm{\sigma}_{L^\infty (0, T; C^{\alpha, p})}.
\end{gathered}
\end{equation}
The third term is bounded by
\begin{equation}
\frac{Ct}{\nu} \norm{\eta}_{Lip(0, T; C^{\alpha})} \norm{\sigma}_{L^\infty (0, T; C^{\alpha, p})}
\end{equation}
by the virtue of Theorem \ref{p}. For the last term, note that
\begin{equation}
\begin{gathered}
\left (\eta(t) \cdot (\nabla \nabla g_{\nu (t-s) } ) * \sigma (s) - (\nabla \nabla g_{\nu (t-s) } ) * (\eta(t) \sigma (s) ) \right ) (x) 
\\ =\int_{\mathbb{R}^d} \nabla \nabla g_{\nu (t-s)} (z) z \cdot \left ( \int_0 ^1 \nabla \eta ( x - (1-\lambda) z, t) d\lambda \right ) \sigma(x-z, s) dz
\end{gathered} \label{convcom}
\end{equation}
and note that $\nabla \nabla g_{\nu (t-s)} (z) z$ is a $L^1$ function with
\begin{equation}
\norm{\nabla \nabla g_{\nu (t-s)} (z) z }_{L^1} \le \frac{C} {\left (\nu (t-s) \right )^{\frac{1}{2}}}.
\end{equation}
%%%% Now note that if we define, for fixed $t,s$, \begin{equation} f(x,z) = \int_0 ^1 \nabla \eta (x- (1-\lambda) z, t) d\lambda \sigma (x-z, s) \end{equation} and if we put \begin{equation} Tf (x) = \int F(z) f(x,z) dz \end{equation} for a $L^1$ function $F$, then \begin{equation}\begin{gathered} \norm{Tf}_{L^p} = \left (\int \left | \int F(z) f(x,z) dz \right |^p  dx \right )^{\frac{1}{p} } \le \int \left ( \int |f(x,z) |^p dx \right ) ^{\frac{1}{p}} |F(z) | dz, \\ \norm{Tf}_{L^\infty} \le \mathrm{esssup}_{x,z} |f(x,z)| \norm{F}_{L^1}, \\ \left [ Tf \right ]_{\alpha} \le \frac{1}{|h|^{\alpha}} \mathrm{esssup}_{x,z} |f(x+h, z) - f(x,z) | \norm{F}_{L^1},  \end{gathered} \end{equation} and \begin{equation} \begin{gathered} \left ( \int |f(x,z) |^p dx \right ) ^{\frac{1}{p}} \le \norm{\eta(t)}_{Lip} \norm{\sigma (s) }_{L^p}, \\ \mathrm{esssup}_{x,z} |f(x,z)|  \le \norm{\eta(t)}_{Lip} \norm{\sigma(s)}_{L^\infty}, \\ \mathrm{esssup}_{x,z} |f(x+h, z) - f(x,z) |  \le |h|^{\alpha} C \norm{\eta(t)}_{C^{1+\alpha}} \norm{\sigma(s)}_{C^{\alpha}}. \end{gathered} \end{equation} %%%% The hidden argument.
Therefore, 
\begin{equation}
\begin{gathered}
\norm{\left (\eta(t) \cdot (\nabla \nabla g_{\nu (t-s) } ) * \sigma (s) - (\nabla \nabla g_{\nu (t-s) } ) * (\eta(t) \sigma (s) ) \right )}_{\alpha, p} \\ \le \frac{C}{\left (\nu (t-s) \right )^{\frac{1}{2}}} \norm{\eta(t)}_{C^{1+\alpha}} \norm{\sigma(s)}_{\alpha, p}
\end{gathered}
\end{equation}
so that the last term is bounded by 
\begin{equation}
C \left ( \frac{t}{\nu} \right )^{\frac{1}{2}} \norm{\eta(t)}_{C^{1+\alpha}} \norm{\sigma}_{L^\infty (0, T; C^{\alpha, p})}.
\end{equation}
We finish the proof by replacing $\eta$ by $X'$ using Theorem \ref{p}. 
\end{proof}
%%%% commutator of G %%%%
\begin{theorem}
Let $0<\alpha < 1$, $1 < p < \infty$ and let $T >0$. Let $X' \in Lip(0, T; C^{1+\alpha})$ with $\partial_t X' \in L^\infty (0, T; C^{1+\alpha})$. There exists a constant $C(\alpha)$ depending only on $\alpha$ such that
\begin{equation}
\begin{gathered}
\norm{ \left [X' \circ X^{-1} \cdot \nabla, \mathbb{G} \right ] (\tau \circ X^{-1} ) }_{L^\infty (0, T; C^{\alpha, p})} 
%\\ \frac{C(\alpha)}{\nu} \left (1 + T + \left (\frac{T}{\nu} \right )^2 \right ) \left (1 + \norm {X- \mathrm{Id}}_{Lip(0, T; C^{1+\alpha})} \right )^{5+2\alpha} \\  \norm{X'}_{Lip (0, T; C^{1+\alpha})} \norm{\tau}_{Lip(0, T; C^{\alpha, p})}.
\\ \le ( \norm{X'}_{L^\infty (0, T; C^{1+\alpha})} + \norm{X'}_{Lip(0, T; C^{1+\alpha})} T^{\frac{1}{2}} ) R
\end{gathered}
\end{equation} 
where $R$ is a polynomial function on $\norm{\tau}_{Lip(0, T; C^{\alpha, p})}$, $\norm{X - \mathrm{Id}}_{Lip(0, T; C^{1+\alpha})}$, whose coefficients depend on $\alpha$, $\nu$, and $T$, and in particular it grows polynomially in $T$ and bounded below.
\label{commG}
\end{theorem}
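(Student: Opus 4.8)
The plan is to exploit the factorization $\mathbb{G} = (R\otimes R)\mathbb{H}\,\Gamma =: \mathbb{A}\Gamma$ used in Theorem \ref{G}, where $\mathbb{A}=(R\otimes R)\mathbb{H}$ is a \emph{time-independent} Calderón--Zygmund operator bounded on $C^{\alpha,p}$ and $\Gamma$ is the space-convolution operator with the Schwartz kernel $\Delta g_{\nu(t-s)}$ of (\ref{Gamma}). Writing $\eta=X'\circ X^{-1}$ and $\sigma=\tau\circ X^{-1}$, I would first pull $\eta(t)\cdot\nabla$ through $\mathbb{A}$, producing the genuine spatial commutator $[\eta(t)\cdot\nabla,\mathbb{A}](\Gamma\sigma(t))$, which is controlled by Lemma \ref{ComEs1} together with the bound on $\norm{\Gamma\sigma}_{L^\infty(0,T;C^{\alpha,p})}$ already furnished by Theorem \ref{G}. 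This leaves the remainder $\mathbb{A}J$ with $J=\eta(t)\cdot\nabla\Gamma\sigma-\Gamma(\eta\cdot\nabla\sigma)$; since $\mathbb{A}$ is bounded on $C^{\alpha,p}$ it suffices to estimate $J$ in $C^{\alpha,p}$.

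The key structural move, copied from the proof of Theorem \ref{commU}, is to rewrite $\eta(s)\cdot\nabla\sigma=\mathrm{div}(\eta(s)\sigma)-\mathrm{div}\eta(s)\,\sigma$ and transfer the extra derivative onto the kernel \emph{before} separating time differences, so that $\eta$ always multiplies $\sigma$ and never $\nabla\sigma$. This yields a divergence piece $J_2=\Gamma(\mathrm{div}\eta\,\sigma)$ together with $J_1$, the commutator of multiplication by $\eta(t)$ with convolution by $\nabla\Delta g_{\nu(t-s)}$, which I split as $J_1=J_{1a}+J_{1b}$, where $J_{1b}=\int_0^t\nabla\Delta g_{\nu(t-s)}*\bigl((\eta(t)-\eta(s))\sigma(s)\bigr)\,ds$ is the time-difference part and $J_{1a}$ the frozen part at $\eta(t)$. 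I would dispose of $J_2$ by recognizing $\mathrm{div}\eta=\bigl(\mathrm{tr}[(\nabla_a X')(\nabla_a X)^{-1}]\bigr)\circ X^{-1}$, so that $\mathrm{div}\eta\,\sigma=(\Phi\tau)\circ X^{-1}$ with $\Phi\tau\in Lip(0,T;C^{\alpha,p})$; here the hypotheses $\partial_t X'\in L^\infty(0,T;C^{1+\alpha})$ and $\partial_t X\in L^\infty(0,T;C^{1+\alpha})$ are exactly what makes $\Phi$ Lipschitz in time into $C^\alpha$, and then Theorem \ref{G} applies verbatim to $\Gamma\bigl((\Phi\tau)\circ X^{-1}\bigr)$. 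For $J_{1b}$ the kernel $\nabla\Delta g_{\nu(t-s)}$ has $L^1$ norm $\sim(\nu(t-s))^{-3/2}$, while $\norm{(\eta(t)-\eta(s))\sigma(s)}_{C^{\alpha,p}}\le C|t-s|\,\norm{\eta}_{Lip(0,T;C^{\alpha})}\norm{\sigma}_{C^{\alpha,p}}$ by the product rule and the last estimate of Theorem \ref{p}; the gain of $|t-s|$ leaves the integrable power $(t-s)^{-1/2}$, whose time integral $\sim T^{1/2}$ is what produces the factor $\norm{X'}_{Lip(0,T;C^{1+\alpha})}T^{1/2}$.

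For $J_{1a}$ I would use the cancellation trick of Theorem \ref{G}, writing $\sigma(s)=\sigma(t)+(\sigma(s)-\sigma(t))$. On $\sigma(t)$ the time integral telescopes, since $\int_0^t\nabla\Delta g_{\nu(t-s)}\,ds$ is, up to a constant in $\nu$, the convolution operator $\nabla(\delta_0-g_{\nu t})$; the resulting frozen commutators of $\eta(t)$ with $\nabla$ and with $\nabla g_{\nu t}$ are bounded by $C\norm{\eta(t)}_{C^{1+\alpha}}\norm{\sigma(t)}_{C^{\alpha,p}}$ via the mean-value form (\ref{convcom}). On the remainder I split $\sigma(s)-\sigma(t)=\Delta_1\tau+\Delta_2\tau$ as in (\ref{Deltas}). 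The $\Delta_1$ contribution is Lipschitz in time, $\norm{\Delta_1\tau(s,t)}_{C^{\alpha,p}}\le C|t-s|\,M_X^\alpha\norm{\tau}_{Lip(0,T;C^{\alpha,p})}$, and this $|t-s|$ beats the $(t-s)^{-1}$ coming from the commutator kernel $\partial_i\Delta g_{\nu(t-s)}(z)\,z$ (whose $L^1$ norm is $\sim(t-s)^{-1}$), leaving an integrable $(t-s)^{0}$.

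The main obstacle is the $\Delta_2$ contribution to $J_{1a}$, i.e. the interaction of the spatial commutator with the flow displacement $\tau(X^{-1}(x,s),t)-\tau(X^{-1}(x,t),t)$. For the $L^\infty$ norm one can still use $\norm{\Delta_2\tau(s,t)}_{L^\infty}\le|t-s|^\alpha\norm{X-\mathrm{Id}}_{Lip(0,T;C^{1+\alpha})}^\alpha\norm{\tau}_{L^\infty(0,T;C^{\alpha,p})}$ against the commutator kernel's $(t-s)^{-1}$ singularity, giving the integrable power $(t-s)^{-1+\alpha}$. For the $L^p$ norm and the Hölder seminorm, however, I expect to need a new kernel estimate in the spirit of Lemma \ref{Ktype}: after the mean-value expansion in $\eta$ and a volume-preserving change of variables, the term becomes $\int_0^t\!\int\tilde K(x,z,t,s)\,\tau(z,t)\,dz\,ds$, with $\tilde K$ built from $\nabla\Delta g$, the weight $z$, the factor $\nabla\eta(t)$, and the displacement $X(z,s)-X(z,t)$, and one must show $\sup_z\norm{\tilde K(\cdot,z,t,s)}_{L^1}$ and $\sup_x\norm{\tilde K(x,\cdot,t,s)}_{L^1}$ are $\le C(t-s)^{-1/2}\norm{X-\mathrm{Id}}_{Lip(0,T;C^{1+\alpha})}\norm{\nabla\eta(t)}_{L^\infty}$. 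The scaling closes because the displacement mean value adds one derivative, raising the $(t-s)^{-1}$ of the commutator kernel to $(t-s)^{-3/2}$, while the factor $|X(z,s)-X(z,t)|\le C|t-s|\norm{X-\mathrm{Id}}_{Lip(0,T;C^{1+\alpha})}$ restores a full power, leaving the integrable $(t-s)^{-1/2}$ to which generalized Young (Lemma \ref{GenY}) applies; the Hölder seminorm is then obtained by the finite-difference ($\delta_h$) splitting into the regions $t-s<|h|$ and $|h|<t-s<t$ used at the end of the proof of Theorem \ref{G}. Collecting contributions, the frozen and single-time terms carry $\norm{\eta(t)}_{C^{1+\alpha}}\le\norm{X'}_{L^\infty(0,T;C^{1+\alpha})}M_X^{1+2\alpha}$, while the time-difference terms carry $\norm{\eta}_{Lip(0,T;C^{\alpha})}T^{1/2}\le\norm{X'}_{Lip(0,T;C^{1+\alpha})}\norm{X-\mathrm{Id}}_{Lip(0,T;C^{1+\alpha})}M_X^{1+3\alpha}T^{1/2}$, which assemble into the stated bound with $R$ polynomial in $\norm{\tau}_{Lip(0,T;C^{\alpha,p})}$ and $\norm{X-\mathrm{Id}}_{Lip(0,T;C^{1+\alpha})}$.
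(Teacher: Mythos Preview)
Your proposal is correct and matches the paper's argument almost term for term: the same factorization $[\eta\cdot\nabla,\mathbb{G}]=(R\otimes R)\mathbb{H}\,[\eta\cdot\nabla,\Gamma]+[\eta(t)\cdot\nabla,(R\otimes R)\mathbb{H}]\,\Gamma$, the second piece handled by Lemma~\ref{ComEs1} plus Theorem~\ref{G}, and then the decomposition of $[\eta\cdot\nabla,\Gamma]$ into a frozen commutator on $\sigma(t)$, a frozen commutator on $\sigma(s)-\sigma(t)=\Delta_1\tau+\Delta_2\tau$, a time-difference-in-$\eta$ piece, and a divergence piece. Your $J_{1a}$ on $\sigma(t)$, $J_{1a}$ on $\Delta_1\tau+\Delta_2\tau$, $J_{1b}$, and $J_2$ are precisely the paper's $I_1+I_6$, $I_2$, $I_3$, and $I_4+I_5+I_6$, with the same mean-value/kernel estimates (the $\partial_i\Delta g(z)z$ trick, the Lemma~\ref{Ktype}-type bound, the $\delta_h$ splitting for the H\"older seminorm).

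The one genuine difference is your treatment of $J_2=\Gamma(\mathrm{div}\,\eta\,\sigma)$: you package $\mathrm{div}\,\eta\,\sigma=(\phi\tau)\circ X^{-1}$ with $\phi=\mathrm{tr}\bigl[(\nabla_aX)^{-1}\nabla_aX'\bigr]\in Lip(0,T;C^\alpha)$ and invoke Theorem~\ref{G} directly on $\phi\tau\in Lip(0,T;C^{\alpha,p})$, whereas the paper re-expands into $I_4+I_5+I_6$ and redoes the $\Delta_1/\Delta_2$ and kernel work by hand. Your shortcut is legitimate (the hypotheses $X'\in Lip(0,T;C^{1+\alpha})$ and $X-\mathrm{Id}\in Lip(0,T;C^{1+\alpha})$ do make $\phi$ Lipschitz in time into $C^\alpha$, and the $\norm{X'}_{Lip}$ contribution from $\partial_t\phi$ inherits the $T^{1/2}$ factor from the $C_4(T,X)$ of Theorem~\ref{G}); it is shorter and clarifies that the divergence piece needs nothing beyond Theorem~\ref{G}. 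The paper's explicit route has the minor benefit of displaying the exact powers of $T$ and $\norm{X-\mathrm{Id}}_{Lip(0,T;C^{1+\alpha})}$, but both reach the stated bound.
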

\begin{proof}
Again we denote $\eta = X' \circ X^{-1}$. Also it suffices to bound
\begin{equation}
\begin{gathered}
\left [\eta \cdot \nabla, \Gamma \right ] \left (\tau \circ X^{-1} \right ) = \eta(t) \cdot \nabla \Gamma \left (\tau \circ X^{-1} \right ) - \Gamma \left (\eta \cdot \nabla \left (\tau \circ X^{-1} \right ) \right )
\end{gathered}
\end{equation}
where $\Gamma$ is as defined in (\ref{Gamma}), since
\begin{equation}
\left [ \eta \cdot \nabla, \mathbb{G} \right ] = (R \otimes R ) \mathbb{H} \left [ \eta \cdot \nabla, \Gamma \right ] + \left [ \eta(t) \cdot \nabla, ( R \otimes R ) \mathbb{H} \right ] \Gamma
\end{equation}
and the second term is bounded by Lemma \ref{ComEs1}. For the first term, we have
\begin{equation}
\begin{gathered}
\left [ \eta \cdot \nabla, \Gamma \right ] (\tau \circ X^{-1} ) (t) = I_1 + I_2 + I_3 + I_4 + I_5 + I_6,
\end{gathered}
\end{equation}
where
\begin{equation}
\begin{gathered}
I_1 = \int_0 ^t \eta(t) \cdot \left (\nabla \Delta g_{\nu (t-s) } * \left (\tau \circ X^{-1}(t) \right ) \right ) - \nabla \Delta g_{\nu (t-s) } * \left (\eta(t) \tau \circ X^{-1} (t) \right ) ds, \\
I_2 = \int_0 ^t \eta(t) \cdot \left (\nabla \Delta g_{\nu (t-s) } * \left (\tau \circ X^{-1}(s) - \tau \circ X^{-1}(t) \right ) \right ) \\ - \nabla \Delta g_{\nu (t-s) } * \left (\eta(t) \left (\tau \circ X^{-1}(s) - \tau \circ X^{-1}(t) \right ) \right ) ds, \\
I_3 = - \int_0 ^t \nabla \Delta g_{\nu (t-s) } * \left ( (\eta(s) - \eta(t) ) \left (\tau \circ X^{-1} (s) \right ) \right ) ds, \\
I_4 =  \int_0 ^t \Delta g_{\nu (t-s) } * \left ( \nabla \cdot \left ( \eta(s) -\eta(t) \right ) \tau \circ X^{-1} (s) \right ) ds, \\
I_5 =  \int_0 ^t \Delta g_{\nu (t-s) } * \left ( \nabla \cdot \eta(t) \left (\tau \circ X^{-1} (s) - \tau \circ X^{-1} (t) \right ) \right ) ds, \\
I_6 = -\frac{1}{\nu} \left ( \nabla \cdot \eta (t) \tau \circ X^{-1} (t) - g_{\nu t} * \left ( \nabla \cdot \eta(t) \tau \circ X^{-1} (t) \right ) \right ).
\end{gathered}
\end{equation}
First, $I_1 + I_6$ can be  bounded:
\begin{equation}
\begin{gathered}
I_1 + I_6 = \frac{1}{\nu} \left ( \eta(t) \cdot \nabla \left ( g_{\nu t} * \left ( \tau \circ X^{-1} (t) \right ) \right ) - \nabla \left ( g_{\nu t} * \left (\eta(t) \tau \circ X^{-1} (t) \right ) \right ) \right )  \\ - \frac{1}{\nu} g_{\nu t} * \left (\nabla \cdot \eta(t) \left ( \tau \circ X^{-1} (t) \right ) \right ) 
\end{gathered}
\end{equation}
and the first term is treated in the same way as (\ref{convcom}). Since the first term is
\begin{equation}
\frac{1}{\nu} \left ( \int_{\mathbb{R}^d} \nabla g_{\nu t} (y)  y \cdot \int_0 ^1 \nabla \eta (x - (1-\lambda) y, t) d\lambda \left (\tau \circ X^{-1} \right ) (x-y, t)  dy \right )
\end{equation}
and 
\begin{equation}
\norm{\nabla g_{\nu t} (y) y }_{L^1} \le C,
\end{equation}
the $C^{\alpha, p}$-norm of the first term is bounded by
\begin{equation}
\frac{C}{\nu} \norm{\eta (t) }_{C^{1+\alpha}} \norm{\tau \circ X^{-1} (t) }_{\alpha, p}.
\end{equation}
The $C^{\alpha, p}$-norm of the second term is also bounded by the same bound. Therefore,
\begin{equation}
%\norm{I_1 (t) + I_6 (t) }_{\alpha, p} \le \frac{C}{\nu} \norm{\eta(t)}_{C^{1+\alpha}} \norm{\tau \circ X^{-1} (t) }_{\alpha, p}.
\norm{I_1  + I_6  }_{L^\infty (0, T; C^{\alpha, p})} \le \frac{C}{\nu} M_X ^{1+3\alpha} \norm{X'}_{L^\infty (0, T; C^{1+\alpha})} \norm{\tau}_{L^\infty (0, T; C^{\alpha, p})}.
\end{equation}
The term $I_3$ is bounded due to Theorem \ref{p}. Since $\eta \in Lip (0, T; C^{\alpha})$ we have
\begin{equation}
\begin{gathered}
%\norm{I_3(t)}_{\alpha, p} \le \frac{C t ^{\frac{1}{2}}}{\nu^{\frac{3}{2}}} \norm{\eta}_{Lip(0, T; C^{\alpha})} \norm{\tau \circ X^{-1} }_{L^\infty (0, T; C^{\alpha, p})}.
\norm{I_3 }_{L^\infty (0, T; C^{\alpha, p})} \le \frac{C}{\nu} \left (\frac{T}{\nu} \right )^{\frac{1}{2}} M_X ^{1+4\alpha} \norm{X - \mathrm{Id}}_{Lip(0, T; C^{1+\alpha})} \\ \norm{X'}_{Lip(0, T; C^{1+\alpha})} \norm{\tau}_{L^\infty (0, T; C^{\alpha, p})}.
\end{gathered}
\end{equation}
The terms $I_4$, and $I_5$ are treated in the spirit of Theorem \ref{G}. We treat $L^p \cap L^\infty$ norm and H\"{o}lder seminorm separately. For the term $I_5$, we have
\begin{equation}
I_5 = \int_0 ^t \Delta g_{\nu (t-s)} * \left ( \nabla \cdot \eta(t) \left ( \Delta_1 \tau (s,t) + \Delta_2 \tau (s,t) \right ) \right ) ds 
\end{equation}
where $\Delta_1 \tau$ and $\Delta_2 \tau$ are the same as (\ref{Deltas}). From the same arguments from the above,
\begin{equation}
\begin{gathered}
\norm{ \int_0 ^t \Delta g_{\nu (t-s)} * \left ( \nabla \cdot \eta(t) \Delta_1 \tau (s,t) \right ) ds}_{\alpha, p} \\ \le \frac{Ct}{\nu} \norm{\eta}_{L^\infty (0, T; C^{1+\alpha})} \norm{\tau}_{Lip(0, T; C^{\alpha, p})} M_X ^{\alpha}.
\end{gathered}
\end{equation}
On the other hand,
\begin{equation}
\begin{gathered}
\Delta g_{\nu (t-s) } * \left ( \nabla \cdot \eta(t) \Delta_2 \tau (s,t) \right ) (x) = \int_{\mathbb{R}^d}  ( K(x,z,t,s) \left ( \nabla \cdot \eta \right ) (X(z,t), t) \\ + \Delta g_{\nu (t-s) } (x - X(z,t) ) \left ( \left (\nabla \cdot \eta  \right ) (X(z,s), t) - \left (\nabla \cdot \eta \right ) (X(z,t), t) \right )  ) dz,
\end{gathered}
\end{equation}
where $K$ is as in (\ref{K}). Then as in the proof of Lemma \ref{Ktype}, by the generalized Young's inequality we have
\begin{equation}
\begin{gathered}
\norm{ \int_0 ^t \Delta g_{\nu (t-s)} * \left ( \nabla \cdot \eta(t) \Delta_2 \tau (s,t) \right ) ds}_{L^p \cap L^\infty} \le C \norm{\tau (t) }_{L^p \cap L^\infty } \norm{\eta}_{L^\infty (0, T; C^{1+\alpha})} \\
\norm{X - \mathrm{Id}}_{Lip(0, T; C^{1+\alpha})} \left ( \frac{t^\alpha}{\nu \alpha} +  \left ( \frac{t}{\nu^3} \right )^{\frac{1}{2}} + \frac{t^2}{\nu^3} \norm{X - \mathrm{Id}}_{Lip(0, T; C^{1+\alpha})} ^3 \right ).
\end{gathered}
\end{equation}
For the H\"{o}lder seminorm, we repeat the same argument in the proof of Theorem \ref{G}, using the bound (\ref{Lapga}). Then we obtain
\begin{equation}
\begin{gathered}
\frac{1}{|h|^{\alpha} } \norm{\delta_h \left ( \int_0 ^t \Delta g_{\nu (t-s) } * \Delta_2 \tau (s,t) ds \right ) } _{L^\infty} \\ 
\le  \frac{C(\alpha)}{\nu} \left ( 1 + \left (\frac{t}{\nu} \right )^{\frac{1}{2}} + \left (\frac{t}{\nu} \right )^2 \right )\norm{X- \mathrm{Id}}_{Lip(0, T; C^{1+\alpha})} ^\alpha \norm{\tau}_{L^\infty (0, T; C^{\alpha, p})} \norm{\eta}_{L^\infty (0, T; C^{1+\alpha})} .
\end{gathered}
\end{equation}
Therefore,
\begin{equation}
\begin{gathered}
%\norm{I_5 (t) }_{\alpha, p} \le \frac{C(\alpha)}{\nu} \left (1 + t + \left ( \frac{t}{\nu} \right )^2 \right ) \\ \left (1 + \norm{X - \mathrm{Id} }_{Lip(0, T; C^{1+\alpha})} \right )^3 \norm{\eta}_{L^\infty (0, T; C^{1+\alpha})} \norm{\tau}_{Lip(0, T; C^{\alpha, p})}.
\norm{I_5}_{L^\infty (0 , T; C^{\alpha, p})}  \le \frac{C(\alpha)}{\nu} \left (1 + t + \left ( \frac{t}{\nu} \right )^2 \right )  \left (1 + \norm{X - \mathrm{Id} }_{Lip(0, T; C^{1+\alpha})} \right )^3 M_X ^{1+2\alpha}
\\ \norm{X'}_{L^\infty (0, T; C^{1+\alpha})} \norm{\tau}_{Lip(0, T; C^{\alpha, p})}.
\end{gathered}
\end{equation}
The term $I_4 (t)$ is treated in the exactly same way, by noting that
\begin{equation}
\begin{gathered}
\nabla \cdot \left (\eta(s) - \eta(t) \right ) = \nabla_x X^{-1} (s) : (\Delta_1 \nabla_a X' (s,t) ) + \nabla_x X^{-1} (s) : (\Delta_2 \nabla_a X' (s,t) )\\
 + \left (\nabla_x X^{-1} (s) - \nabla_x X^{-1} (t) \right ) : \left ( \nabla_a X' \circ X^{-1} \right ) (t),
\end{gathered}
\end{equation}
where as in (\ref{Deltas})
\begin{equation}
\begin{gathered}
\Delta_1 \nabla_a X' (x,s,t) = \nabla_a X' (X^{-1} (x,s), s) - \nabla_a X' (X^{-1} (x,s), t), \\
\Delta_2 \nabla_a X' (x,s,t) = \nabla_a X' (X^{-1} (x,s), t) - \nabla_a X' (X^{-1} (x,t), t),
\end{gathered}
\end{equation}
and
\begin{equation}
\nabla_x \left ( X^{-1} (x,s) - X^{-1} (x,t) \right )= \left ( \nabla_a X \circ X^{-1} \right ) (x, t) \left ( \nabla_a \left (X - \mathrm{Id} \right ) \right ) \left ( X^{-1} (x,t), t-s \right )
\end{equation}
so that 
\begin{equation}
\begin{gathered}
%\norm{\nabla_x X^{-1} (s) - \nabla_x X^{-1} (t) }_{C^{\alpha} } \le \norm{\nabla_a X(t) }_{C^\alpha} \norm{ \nabla_a (X- \mathrm{Id} )(t-s) )}_{C^{\alpha}} \left (1 + \norm{X - \mathrm{Id}}_{L^\infty (0, T; C^{1+\alpha})} \right )^{2\alpha}
\norm{\nabla_x X^{-1} (s) - \nabla_x X^{-1} (t) }_{C^{\alpha} } \le |t-s| \norm{X - \mathrm{Id}}_{Lip(0, T; C^{1+\alpha})}M_X ^{1+ 2\alpha}.
\end{gathered}
\end{equation}
%%%% So that we can treat the third term in the same method as when we treat the first term. %%%%
Also note that
\begin{equation}
\norm{\Delta_2 \nabla_a X'(s,t) }_{L^\infty } \le \norm {\nabla_a X'(t) }_{C^{\alpha}} \norm{X - \mathrm{Id}}_{Lip(0, T; L^\infty)}^{\alpha} |t-s|^{\alpha}
\end{equation}
so that
\begin{equation}
\begin{gathered}
\norm{\int_0 ^t \Delta g_{\nu (t-s) } * \left ( \nabla_x X^{-1} (s) : (\Delta_2 \nabla_a X' (s,t) ) \tau \circ X^{-1} (s) \right ) ds}_{C^{\alpha, p}}
\\ \le \frac{C(\alpha)} {\nu} \left ( 1 + t^{\alpha} + \left (\frac{t}{\nu} \right )^2 \right ) M_X ^{1+2 \alpha} \norm{X - \mathrm{Id}}_{Lip(0, T; C^{1+\alpha})} ^{\alpha}
\\  \norm{X'}_{L^\infty (0, T; C^{1+\alpha})} \norm{\tau}_{L^\infty (0, T; C^{\alpha, p})}. 
\end{gathered}
\end{equation}
The final result is
\begin{equation}
\begin{gathered}
\norm{I_4 (t) }_{\alpha, p} \le \frac{C(\alpha)}{\nu} \left (1 + t + \left (\frac{t}{\nu} \right )^2 \right ) M_X ^{2+ 4\alpha} 
 \norm{X'}_{L^\infty (0, T; C^{1+\alpha})} \norm{\tau}_{L^\infty (0, T; C^{\alpha, p})}
 \\ + C \frac{t}{\nu} M_X ^{1+3\alpha} \norm{X'}_{Lip(0, T; C^{1+\alpha})} \norm{\tau}_{L^\infty (0, T; C^{\alpha, p})}.
\end{gathered}
\end{equation}
Finally, $I_2$ can be bounded using the combination of the technique in Theorem \ref{G} and Theorem \ref{commU}. First, we have
\begin{equation}
\begin{gathered}
I_2 (x,t)  = \\  \int_0 ^t \int_{\mathbb{R}^d} \nabla \Delta g_{\nu (t-s) } (y) \cdot y \cdot  \left ( \int_0 ^1 \nabla \eta (x - (1 - \lambda)y, t ) d\lambda \left ( \Delta_1 \tau (x-y, s,t) \right ) \right ) dy ds
\\ + \int_0 ^t \int_{\mathbb{R}^d} \nabla \Delta g_{\nu (t-s) } (x-z) \cdot (x-z) \cdot  \left ( \int_0 ^1 \nabla \eta (\lambda x + (1-\lambda) z, t ) d\lambda \left ( \Delta_2 \tau (z, s,t) \right ) \right ) dz ds.
\end{gathered}
\end{equation}
Then applying the argument of the proof of Theorem \ref{commU}, the first term is bounded by
\begin{equation}
\frac{C}{\nu} t M_X ^{\alpha} \norm{\eta}_{L^\infty (0, T; C^{1+\alpha})} \norm{\tau}_{Lip(0, T; C^{\alpha, p})}.
\end{equation}
The second term is treated using the method used in Theorem \ref{G}. By changing variables to form a kernel similar to (\ref{K}), and applying generalized Young's inequality, the $L^p \cap L^\infty$ norm of the second term is bounded by
\begin{equation}
\frac{C(\alpha)}{\nu} \left ( t^{\alpha} + \left (\frac{t}{\nu} \right )^{\frac{1}{2}} + \left (\frac{t}{\nu} \right )^{2} \right ) \left (1 + \norm{X - \mathrm{Id}}_{Lip(0, T; C^{1+\alpha})} \right )^4 \norm{\eta}_{L^\infty (0, T; C^{1+\alpha})} \norm{\tau}_{L^\infty (0, T; L^p \cap L^\infty ) }.
\end{equation}
Finally, the H\"{o}lder seminorm of the second term is bounded by the same method as Theorem \ref{G}. The only additional point is the finite difference of $\nabla \eta$ term, but this term is bounded by a straightforward estimate. The bound for the H\"{o}lder seminorm of the second term is
\begin{equation}
\frac{C(\alpha)}{\nu} \left (1 + t^{\alpha} + \left (\frac{t}{\nu} \right )^{\frac{1}{2}} + \left (\frac{t}{\nu} \right )^2 \right ) \norm {X - \mathrm{Id}}_{Lip(0, T; C^{1+\alpha})} ^{\alpha} \norm{\eta}_{L^\infty (0, T; C^{1+\alpha})} \norm{\tau}_{L^\infty (0, T; C^{\alpha, p})}.
\end{equation}
To sum up, we have
\begin{equation}
\begin{gathered}
\norm{I_2 (t)}_{\alpha, p} \le \frac{C(\alpha)}{\nu} \left (1 + t + \left (\frac{t}{\nu} \right )^2 \right ) \left (1 + \norm {X- \mathrm{Id}}_{Lip(0, T; C^{1+\alpha})} \right )^{4} M_X ^{1+3\alpha}
\\  \norm{X'}_{L^\infty (0, T; C^{1+\alpha})} \norm{\tau}_{Lip(0, T; C^{\alpha, p})}.
\end{gathered}
\end{equation}
If we put this together, 
\begin{equation}
\begin{gathered}
\norm{ \left [X' \circ X^{-1} \cdot \nabla, \mathbb{G} \right ] (\tau \circ X^{-1} ) }_{L^\infty (0, T; C^{\alpha, p})} 
\\ \le C \norm{X'}_{L^\infty (0 ,T; C^{1+\alpha})} M_X ^{1+2\alpha} \norm{\Gamma (\tau \circ X^{-1}) }_{L^\infty (0, T; C^{\alpha, p})}
\\ +  ( \norm{X'}_{L^\infty (0, T; C^{1+\alpha})}  + \norm{X'}_{Lip(0, T; C^{1+\alpha})} T^{\frac{1}{2}} ) F_1 (\nu, \alpha, X, \norm{\tau}_{Lip(0, T; C^{\alpha, p})}, T ) 
\end{gathered}
\end{equation} 
%and we can replace $T \norm{\tau}_{Lip(0, T; C^{\alpha, p})}$ by $T^\beta \norm{\tau}_{C^\beta (0, T; C^{\alpha, p})}$ as in Theorem \ref{G}.
where $F_1$ depends on the written variables and grows like polynomial in $T, \norm{\tau}_{Lip(0, T; C^{\alpha, p})}$, and $\norm{X - \mathrm{Id}}_{Lip(0, T; C^{1+\alpha})}$. The bound on $\Gamma (\tau \circ X^{-1})$ is given by Theorem \ref{G}.
\end{proof}
%%%% commutator of G%%%%

%%%% Section 2 %%%%
%%%% Section 3 %%%%

\section{Bounds on variations and variables} \label{Bounds}
Using the results from the previous section we find bounds for variations and variables. For simplicity, we adopt the notation
\begin{equation}
M_\epsilon = 1 + \norm{X_\epsilon - \mathrm{Id} }_{L^\infty (0, T; C^{1+\alpha})}.
\end{equation}
First, we bound  $\frac{d}{d\epsilon} \mathcal{V}_\epsilon$. Note that $X_\epsilon (0) = \mathrm{Id}$, so $X_\epsilon ' (0) = 0$ and by Theorem \ref{p} and since $X_\epsilon ' \in Lip(0, T; C^{1+\alpha, p})$  we have
\begin{equation}
\begin{gathered}
\norm{X_\epsilon'}_{L^\infty (0, T; C^{1+\alpha})} \le T \norm{X_\epsilon '}_{Lip(0, T; C^{1+\alpha, p})},
\\ \norm{\eta_\epsilon (t)}_{C^{\alpha}} \le t \norm{X'}_{Lip(0, T; C^{1+\alpha, p})} M_\epsilon ^{\alpha}.
\end{gathered}
\end{equation}
Then by the Theorem \ref{L}, we have
\begin{equation}
\begin{gathered}
\norm{ \eta_\epsilon \cdot \mathbb{L}_\nu (\nabla_x u_{\epsilon, 0} ) }_{L^\infty (0, T; C^{\alpha, p})} \le C \left (\frac{T}{\nu} \right )^{\frac{1}{2}} M_\epsilon ^{\alpha} \norm{X_\epsilon '}_{Lip(0, T; C^{1+\alpha, p})} \norm{u_{\epsilon, 0}}_{1+\alpha, p}, \\
\norm{\mathbb{L}_{\nu} (u_{\epsilon, 0} ' )  }_{L^\infty (0, T; C^{\alpha, p})} \le C \norm{u_{\epsilon, 0} '}_{\alpha, p}.
\end{gathered}
\end{equation}
By Theorem \ref{commU}, we have
\begin{equation}
\begin{gathered}
\norm{ \left [ \eta_\epsilon \cdot \nabla_x, \mathbb{U} \right ] (\sigma_\epsilon - u_\epsilon \otimes u_\epsilon ) }_{L^\infty (0, T; C^{\alpha, p})} \le C \left ( \left (\frac{T}{\nu} \right )^{\frac{1}{2}} +\left (\frac{T}{\nu} \right )\right )M_\epsilon ^{2+4\alpha}
\\  \norm{X_\epsilon '}_{Lip(0, T; C^{1+\alpha})} \norm{\tau_\epsilon - v_\epsilon \otimes v_\epsilon }_{L^\infty (0, T; C^{\alpha, p})},
\end{gathered}
\end{equation}
and by Theorem \ref{U}, we have
\begin{equation}
\begin{gathered}
\norm{\mathbb{U} \left (\delta_\epsilon - (v_\epsilon ' \otimes v_\epsilon + v_\epsilon \otimes v_\epsilon ' ) \circ X_\epsilon ^{-1} \right )}_{L^\infty (0, T; C^{\alpha, p})} \le C \left (\frac{T}{\nu } \right )^{\frac{1}{2}} M_\epsilon ^{\alpha} 
\\ \norm{\tau_\epsilon ' - (v_\epsilon ' \otimes v_\epsilon + v_\epsilon \otimes v_\epsilon ' )}_{L^\infty (0, T; C^{\alpha, p})}.
\end{gathered}
\end{equation}
Therefore, 
\begin{equation}
\begin{gathered}
%\norm{\frac{d}{d\epsilon} \mathcal{V_\epsilon} }_{L^\infty (0, T; C^{\alpha, p})} \le C M_\epsilon ^{2+5\alpha} \left (1 + \frac{T}{\nu} \right )\\  ( \norm{X_\epsilon '}_{Lip(0, T; C^{1+\alpha})} \left (\norm{u_{\epsilon, 0} }_{1+\alpha, p} + \norm{\tau_\epsilon}_{L^\infty (0, T; C^{\alpha, p})} + \norm{v_\epsilon}_{L^\infty (0, T; C^{\alpha, p})}^2 \right ) + \norm{u_{\epsilon, 0} '}_{\alpha, p} \\ +  \norm{\tau_\epsilon '}_{L^\infty (0, T; C^{\alpha, p})} + \norm{v_\epsilon '}_{L^\infty (0, T; C^{\alpha, p})} \norm{v_\epsilon}_{L^\infty (0, T; C^{\alpha, p})} ).
\norm{\frac{d}{d\epsilon} \mathcal{V_\epsilon} }_{L^\infty (0, T; C^{\alpha, p})} \le C \norm{u_{\epsilon, 0} '}_{\alpha, p} 
\\ + S_1 (T)  (\norm{X_\epsilon '}_{Lip(0,T;C^{1+\alpha, p})} + \norm{v_\epsilon '}_{L^\infty (0, T; C^{\alpha, p})} + \norm{\sigma_{\epsilon,0} ' }_{\alpha, p} + \norm{\tau_\epsilon '}_{Lip(0, T; C^{\alpha, p})} ) Q_1
%\\ C \left ( \left (\frac{T}{\nu} \right )^{\frac{1}{2}} + \left (\frac{T}{\nu} \right ) \right ) Q_1,
\end{gathered} 
\end{equation}
where $S_1(T) $ vanishes as $T^{\frac{1}{2}}$ as $T \rightarrow 0$ and $Q_1$ is a polynomial in $\norm{u_{\epsilon, 0}}_{1+\alpha, p}$, $\norm{X_\epsilon - \mathrm{Id}}_{Lip(0, T; C^{1+\alpha, p})}$, $\norm{\tau_\epsilon}_{L^\infty (0, T; C^{\alpha, p})}$, and $\norm{v_\epsilon}_{L^\infty (0, T; C^{\alpha, p})}$, whose coefficients depend on $\nu$.  Similarly,
\begin{equation}
\begin{gathered}
%\norm{g_\epsilon}_{L^\infty (0, T; C^{\alpha, p})} \le C M_\epsilon ^{4+ \alpha} \left ( 1 + \frac{1}{\nu } + \frac{T}{\nu } + \frac{T^2}{\nu^3} \right) \\ ( \norm{u_{\epsilon, 0}}_{1+\alpha, p} + \norm{\tau_\epsilon}_{Lip(0, T; C^{\alpha, p})} + \norm{v_\epsilon}_{L^\infty (0, T; C^{1+\alpha, p})} ^2 )
\norm{g_\epsilon}_{L^\infty (0, T; C^{\alpha, p})} \le M_X ^{\alpha} \norm{u_0}_{1+\alpha, p} + C_1 \norm{X - \mathrm{Id}}_{Lip(0, T; C^{1+\alpha})} ^{\alpha} \norm{\sigma_{\epsilon,0}}_{\alpha, p} + S_2 (T) Q_2,
\end{gathered}
\end{equation}
where $S_2 (T) $ vanishes as $T^{\frac{1}{2}}$ as $T \rightarrow 0$ and $Q_2$ is polynomial in $\norm{\tau}_{Lip(0,T; C^{\alpha, p})}$ and $\norm{X - \mathrm{Id}}_{Lip(0, T; C^{1+\alpha})}$, whose coefficients depend on $\alpha$ and $\nu$. Also
\begin{equation}
\begin{gathered}
%\norm{g_\epsilon '}_{L^\infty (0, T; C^{\alpha, p})} \le C(\alpha) M_{\epsilon} ^{5+4\alpha} \left ( 1 + \frac{1}{\nu} + \frac{T}{\nu} + \frac{T^2}{ \nu^3}  \right ) \\ ( \norm{X_\epsilon '}_{Lip(0, T; C^{1+\alpha, p})} (\norm{u_{\epsilon, 0} }_{1+\alpha, p} + \norm{\tau_\epsilon}_{Lip(0, T; C^{\alpha, p})} + \norm{v_\epsilon }_{L^\infty (0, T; C^{1+\alpha, p})} ^2 )  \\ + \norm{u_{\epsilon, 0} ' }_{1+\alpha, p} + \norm{\tau_\epsilon '}_{Lip(0, T; C^{\alpha, p})} + \norm{v_\epsilon '}_{L^\infty (0, T; C^{1+\alpha, p})} \norm{v_\epsilon }_{L^\infty (0, T; C^{1+\alpha, p})} ),
\norm{g_\epsilon '}_{L^\infty (0, T; C^{\alpha, p})} \le C (\norm{u_{\epsilon, 0}'}_{1+\alpha, p} + \norm{X - \mathrm{Id}}_{Lip(0, T; C^{1+\alpha})} ^{\alpha} \norm{\tau_{\epsilon, 0} '}_{\alpha, p} ) 
\\ + S_3 (T) (\norm{X_\epsilon '}_{Lip(0,T; C^{1+\alpha, p})} + \norm{\sigma_{\epsilon, 0} ' }_{\alpha, p} + \norm{\tau_\epsilon '}_{Lip(0, T; C^{\alpha, p})} + \norm{v_\epsilon '}_{L^\infty (0, T; C^{1+\alpha, p})} )Q_3,
\end{gathered}
\end{equation}
where $S_3 (T)$ vanishes as $T^{\frac{1}{2}}$ as $T \rightarrow 0$ and $Q_3$ is polynomial in $\norm{u_{\epsilon, 0}}_{1+\alpha, p}$, $\norm{X - \mathrm{Id}}_{Lip(0, T; C^{1+\alpha, p}}$, $\norm{\tau}_{Lip(0, T; C^{\alpha, p})}$, and $\norm{v_\epsilon}_{L^\infty (0, T; C^{1+ \alpha, p})}$, whose coefficients depend on $\nu$ and $\alpha$.  Then we have
\begin{equation}
\norm{ \nabla_a \frac{d}{d\epsilon} \mathcal{V}_\epsilon }_{L^\infty (0, T; C^{\alpha, p})} \le T \norm{X_\epsilon '}_{Lip (0, T; C^{1+\alpha})} \norm{g_\epsilon}_{L^\infty (0, T; C^{\alpha, p})} + M_\epsilon \norm{g_\epsilon '}_{L^\infty (0, T; C^{\alpha, p})}
\end{equation}
and
\begin{equation}
\begin{gathered}
\norm{\frac{d}{d\epsilon } \mathcal{T}_\epsilon }_{L^\infty (0, T; C^{\alpha, p})} \le 2 \norm {g_\epsilon ' }_{L^\infty (0, T; C^{\alpha, p})} \left (\norm{\tau_\epsilon }_{L^\infty (0, T; C^{\alpha, p})}  + 2 \rho K \right )
\\ + \norm{\tau_\epsilon '}_{L^\infty (0, T; C^{\alpha, p})} \left ( \norm{g_\epsilon}_{L^\infty (0, T; C^{\alpha, p})} + 2k \right ).
\end{gathered}
\end{equation}

%%%% Section 3 %%%%
%%%% Section 4 %%%%

\section{Local existence}\label{Localex}
We define the function space $\mathcal{P}_1$ and the set $\mathcal{I}$,
\begin{equation}
\begin{gathered}
\mathcal{P}_1 = Lip(0, T; C^{1+\alpha, p}) \times Lip(0, T; C^{\alpha, p}) \times L^\infty (0, T; C^{1+\alpha, p}) \\
\mathcal{I} = \{ (X, \tau, v) : \norm{(X - \mathrm{Id}, \tau, v)}_{\mathcal{P}_1} \le \Gamma, v = \frac{dX}{dt} \},
\end{gathered}
\end{equation} 
where $\Gamma > 0$ and $T>0$ are to be determined. 
%Also for $0 < \beta < 1$,we define the space $\mathcal{P}$ by
%\begin{equation} \begin{gathered} \mathcal{P} = C^{\beta} (0, T; C^{1+\alpha, p}) \times C^{\beta} (0, T; C^{\alpha, p}) \times L^\infty (0, T; C^{1+\alpha, p}), \\ \norm{ (X - \mathrm{Id}, \tau, v)}_{\mathcal{P}} = \norm{X- \mathrm{Id}}_{C^\beta (0, T; C^{1+\alpha, p})} + \norm{\tau}_{C^\beta (0, T; C^{\alpha, p})} + \delta \norm{v}_{L^\infty (0, T; C^{1+\alpha, p})}\end{gathered} \end{equation}
%where $\delta>0$ will be determined soon. 
Now, for given $u_0 \in C^{1+\alpha, p}$ divergence free and $\sigma_0 \in C^{\alpha, p}$ we define the map
\begin{equation}
(X, \tau, v) \rightarrow \mathcal{S} (X, \tau, v) = (X^{new}, \tau^{new}, v^{new} )
\end{equation}
where
\begin{equation}
\left \{
\begin{gathered}
X^{new} (t) = \mathrm{Id} + \int_0 ^t \mathcal{V} (X(s), \tau(s), v(s) ) ds, \\
\tau ^{new} (t) = \sigma_0 + \int_0 ^t \mathcal{T} (X(s), \tau(s), v(s)) ds, \\
v^{new} (t) = \mathcal{V} (X, \tau, v).
\end{gathered} \right. \label{Solmap}
\end{equation}
%%%% The solution map has initial data as parameters. %%%%
If $(X - \mathrm{Id}, \tau, v) \in \mathcal{P}_1$, then $(X^{new} - \mathrm{Id}, \tau^{new}, v^{new} ) \in \mathcal{P}_1$ for any choice of $T>0$. Moreover, we have the following:
\begin{theorem}
For given $u_0 \in C^{1+\alpha, p}$ divergence free and $\sigma_0 \in C^{\alpha, p}$, there is a $\Gamma > 0$ and $T>0$ such that the map $\mathcal{S}$ of (\ref{Solmap}) maps $\mathcal{I}$ to itself. \label{itself}
\end{theorem}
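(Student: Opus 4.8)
The plan is a standard invariant-ball argument for $\mathcal{S}$, arranged to exploit two features of the estimates of Section \ref{Operators}: every genuinely quadratic contribution carries a positive power of $T$, while the remaining contributions grow only \emph{sublinearly} in the radius $\Gamma$. For an input $(X,\tau,v)\in\mathcal{I}$ --- which I take to carry the initial data $X(0)=\mathrm{Id}$, $\tau(0)=\sigma_0$, $v(0)=u_0$ that $\mathcal{S}$ enforces anyway --- I would first record the consequences of $\norm{(X-\mathrm{Id},\tau,v)}_{\mathcal{P}_1}\le\Gamma$ and $v=\partial_t X$. Since $X-\mathrm{Id}=\int_0^t v\,ds$, one has $\norm{X-\mathrm{Id}}_{L^\infty(0,T;C^{1+\alpha})}\le T\Gamma$, so that $M_X\le 1+T\Gamma\to 1$ as $T\to 0$, whereas $\norm{X-\mathrm{Id}}_{Lip(0,T;C^{1+\alpha})}\le\Gamma$; and by (\ref{tauLip}), $\norm{\tau}_{L^\infty(0,T;C^{\alpha,p})}\le\norm{\sigma_0}_{\alpha,p}+T\Gamma$. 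The upshot is that the \emph{amplitudes} of $X-\mathrm{Id}$ and $\tau$ are controlled by the data plus a term small in $T$, while the $O(\Gamma)$ size resides only in the time-Lipschitz (velocity) norms.

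Next I would bound each output. For $v^{new}=\mathcal{V}(X,\tau,v)$ the function-value part splits into $\mathbb{L}_\nu(u_0)\circ X$, bounded by $C\norm{u_0}_{1+\alpha,p}$ through Theorems \ref{L} and \ref{p}, and $\mathbb{U}((\tau-v\otimes v)\circ X^{-1})\circ X$, bounded by $C(T/\nu)^{1/2}$ times a polynomial in $\Gamma$ through Theorem \ref{U}; the gradient part is handled by the chain rule (\ref{Vg}), $\nabla_a v^{new}=(\nabla_a X)g$, so that $\norm{\nabla_a v^{new}}_{C^{\alpha,p}}\le M_X\norm{g}_{C^{\alpha,p}}$. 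The decisive input is the bound on $g$ from Section \ref{Bounds}, in which Theorem \ref{G} provides the splitting $\norm{g}_{L^\infty(0,T;C^{\alpha,p})}\le M_X^\alpha\norm{u_0}_{1+\alpha,p}+C_1\norm{X-\mathrm{Id}}_{Lip}^\alpha\norm{\sigma_0}_{\alpha,p}+S_2(T)Q_2$: the first two terms are $T$-independent but grow only like $\Gamma^\alpha$ with $\alpha<1$, and the full time-Lipschitz norm of $\tau$, as well as the $v\otimes v$ contribution (bounded by Theorem \ref{U}, hence carrying $(T/\nu)^{1/2}$), enter only through $S_2(T)\sim T^{1/2}$. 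Since $\partial_t X^{new}=v^{new}$, the estimate $\norm{X^{new}-\mathrm{Id}}_{Lip(0,T;C^{1+\alpha,p})}\le(1+T)\norm{v^{new}}_{L^\infty(0,T;C^{1+\alpha,p})}$ is immediate.

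For $\tau^{new}=\sigma_0+\int_0^t\mathcal{T}\,ds$ I would use $\norm{\tau^{new}}_{Lip(0,T;C^{\alpha,p})}\le\norm{\sigma_0}_{\alpha,p}+(1+T)\norm{\mathcal{T}}_{L^\infty(0,T;C^{\alpha,p})}$ together with $\norm{\mathcal{T}}\le 2\norm{g}\,\norm{\tau}+2k\norm{\tau}+4\rho K\norm{g}$. This is where the bookkeeping matters: inserting $\norm{\tau}_{L^\infty}\le\norm{\sigma_0}+T\Gamma$ and the splitting of $g$, the quadratic product $\norm{g}\,\norm{\tau}$ yields a $T$-independent piece of size $\sim\Gamma^\alpha\norm{\sigma_0}^2$, which is sublinear in $\Gamma$, plus a genuinely superlinear piece of size $\sim T\,\Gamma^{1+\alpha}\norm{\sigma_0}$ that is rescued by its factor of $T$. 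In other words, no surviving term is simultaneously superlinear in $\Gamma$ and non-vanishing as $T\to 0$.

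With these bounds the argument closes in two ordered steps. First, collecting all $T$-independent contributions to $\norm{v^{new}}_{L^\infty(0,T;C^{1+\alpha,p})}+\norm{\tau^{new}}_{Lip(0,T;C^{\alpha,p})}+\norm{X^{new}-\mathrm{Id}}_{Lip(0,T;C^{1+\alpha,p})}$ produces a bound of the form $F(\norm{u_0}_{1+\alpha,p},\norm{\sigma_0}_{\alpha,p})+C\,\Gamma^\beta$ with some $\beta<1$ (the powers arise only from the Hölder exponent via $\norm{X-\mathrm{Id}}_{Lip}^\alpha$ and $M_X^\alpha$); choosing $\Gamma$ large, depending only on the data, makes this at most $\Gamma/2$, precisely because $\beta<1$. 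Second, with $\Gamma$ now fixed, every remaining contribution carries a factor $S_i(T)$ (a positive power of $T$) times a polynomial in $\Gamma$, so choosing $T=T_0$ small, depending on $\Gamma$ hence on the data, forces these below $\Gamma/2$. Summing gives $\norm{(X^{new}-\mathrm{Id},\tau^{new},v^{new})}_{\mathcal{P}_1}\le\Gamma$, and since $\partial_t X^{new}=\mathcal{V}(X,\tau,v)=v^{new}$ by construction, the constraint $v^{new}=dX^{new}/dt$ holds, so $\mathcal{S}(\mathcal{I})\subseteq\mathcal{I}$. I expect the main obstacle to be exactly the bookkeeping of the third paragraph: verifying that the quadratic terms $g\tau$ and $v\otimes v$ never generate a contribution that is both superlinear in $\Gamma$ and free of a vanishing $T$-factor. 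This is what legitimizes the ordering ``first $\Gamma$, then $T$,'' and it rests essentially on the corrected estimate of Theorem \ref{G}, in which the amplitude of the stress is measured by $\sigma_0$ rather than by the full time-Lipschitz norm of $\tau$.
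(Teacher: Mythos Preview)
Your proposal is correct and follows essentially the same approach as the paper's proof: both bound $\mathcal{V}$ and $g$ via Theorems \ref{L}, \ref{U}, \ref{G} and the chain rule (\ref{Vg}), isolate the $T$-independent contributions of size $O(\Gamma^\alpha)$ with $\alpha<1$, and then close by first choosing $\Gamma$ large (sublinearity) and then $T$ small. Your emphasis on the bookkeeping of the $g\tau$ term --- that the superlinear piece $T\Gamma^{1+\alpha}\norm{\sigma_0}$ carries a factor of $T$ while the surviving $T$-independent piece $\Gamma^\alpha\norm{\sigma_0}^2$ is sublinear --- is exactly the content of the paper's estimate (\ref{itself2}) and the sentence following it.
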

%%%% mapping to itself %%%%
\begin{proof}
It is obvious that $\frac{d}{dt} X^{new} = v^{new}$. For the size of $\mathcal{S} (X, \tau, v)$, first note that if $(X - \mathrm{Id}, \tau, v)_{\mathcal{P}_1 } \le \Gamma$, then
\begin{equation}
M_X = 1 + \norm{X - \mathrm{Id}}_{L^\infty (0, T; C^{1+\alpha})} \le 1 + T \Gamma.
\end{equation}
Applying Theorem \ref{L} and Theorem \ref{U}, we know that
\begin{equation}
\norm{\mathcal{V}}_{L^\infty (0, T; C^{\alpha, p})} \le \norm{u_0}_{\alpha, p} + A_1 (T) B_1 (\Gamma, \norm{u_0}_{\alpha, p}, \norm{\sigma_0}_{\alpha, p}), 
\end{equation}
where $A_1 (T)$ vanishes like $T^{\frac{1}{2}}$ for small $T>0$ and $B_1$ is a polynomial in its arguments, and some coefficients depend on $\nu$.
We estimate
\begin{equation}
\norm{g}_{L^\infty (0, T; C^{\alpha, p})} \le \norm{u_0}_{1+\alpha, p} + C_1 \Gamma ^{\alpha} \norm{\sigma_0}_{\alpha, p} + A_2 (T) B_2 (\Gamma, \norm{u_0}_{1+\alpha, p}, \norm{\sigma_0}_{\alpha,p}),
\end{equation}
where $C_1$ is as in Theorem \ref{G}, depending only on $\alpha$ and $\nu$, $A_2 (T)$ vanishes in the same order as $A_1 (T)$ as $T \rightarrow 0$, and $B_2$ is a polynomial in its arguments, and some coefficients depend on $\nu$ and $\alpha$.
From (\ref{Vg}) we conclude
%and 
%\begin{equation} \norm{\nabla_a X}_{L^\infty (0, T; C^{\alpha})} \le 1 + T\Gamma, \end{equation}
%we get
%\begin{equation} \norm{\nabla_a \mathcal{V}}_{L^\infty (0, T; C^{\alpha, p})} \le (1 + T\Gamma) \norm{g}_{L^\infty (0, T; C^{\alpha, p})}\end{equation}
\begin{equation}
\norm{\mathcal{V}}_{L^\infty (0, T; C^{1+\alpha, p})} \le K_1 ( \norm{u_0}_{1+ \alpha, p} + \Gamma ^{\alpha} \norm{\sigma_0}_{\alpha, p} ) + A_3 (T) B_3(\Gamma, \norm{u_0}_{1+\alpha, p}, \norm{\sigma_0}_{\alpha, p}) \label{itself1},
\end{equation}
where $K_1$ is a constant depending only on $\nu$ and $\alpha$, and $A_3$ and $B_3$ have the same properties as previous $A_i$s and $B_i$s. Now we measure $\mathcal{T}$. From (\ref{tauLip}) and the previous estimate on $g$ we have
\begin{equation}
\begin{gathered}
\norm{\mathcal{T}}_{L^\infty (0, T; C^{\alpha, p})} \le K_2 (\norm{u_0}_{1+\alpha, p} ( \rho K + \norm{\sigma_0}_{\alpha, p}) + \norm{\sigma_0}_{\alpha, p} ( \Gamma^{\alpha} \norm{\sigma_0}_{\alpha, p}  + \rho K \Gamma ^{\alpha} +k ) ) 
\\ + A_4 B_4 \label{itself2},
\end{gathered}
\end{equation}
where $K_2$ is a constant depending on $\nu$ and $\alpha$, and $A_4$ and $B_4$ are as before. Since $\alpha<1$, we can appropriately choose large $\Gamma > \norm{\sigma_0}_{\alpha, p} + \norm{u_0}_{1+\alpha, p}$ and correspondingly small $\frac{1}{6} >T>0$ so that the right side of (\ref{itself1}) and (\ref{itself2}) are bounded by $\frac{\Gamma}{6}$. Then $\norm{ (X^{new} - \mathrm{Id}, \tau^{new}, v^{new} ) }_{\mathcal{P}_1} \le \Gamma$.
\end{proof}
%%%% Mapping to itself part %%%%
We show now that $\mathcal{S}$ is a contraction mapping on $\mathcal{I}$ for a short time.
\begin{theorem}
For given $u_0 \in C^{1+\alpha, p}$ divergence free and $\sigma_0 \in C^{\alpha, p}$, there is a $\Gamma$ and $T>0$, depending only on $\norm{u_0}_{1+\alpha, p}$ and $\norm{\sigma_0}_{\alpha, p}$, such that the map $\mathcal{S}$ is a contraction mapping on $\mathcal{I} = \mathcal{I}(\Gamma, T)$, that is
\begin{equation}
\norm{ \mathcal{S} (X_2, \tau_2, v_2 ) - \mathcal{S} (X_1, \tau_1, v_1) }_{\mathcal{P}_1} \le \frac{1}{2} \norm{(X_2 - X_1, \tau_2 - \tau_1, v_2 - v_1 ) }_{\mathcal{P}_1}.
\end{equation} \label{contraction}
\end{theorem}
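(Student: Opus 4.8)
The plan is to obtain the contraction from the variational (commutator) estimates of Section \ref{Bounds}, exploiting that the two inputs share the \emph{same} initial data. Given $(X_1,\tau_1,v_1),(X_2,\tau_2,v_2)\in\mathcal{I}$ with common data $u_0,\sigma_0$, I would join them by the affine family $X_\epsilon=(2-\epsilon)X_1+(\epsilon-1)X_2$, $\tau_\epsilon=(2-\epsilon)\tau_1+(\epsilon-1)\tau_2$, $v_\epsilon=\pa_t X_\epsilon$, for $\epsilon\in[1,2]$. By convexity this path stays in $\mathcal{I}$, it preserves $v=\pa_t X$, and for $T\Gamma$ small each $X_\epsilon$ is within $O(T\Gamma)$ of the identity, hence invertible, so all composition and commutator estimates apply. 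The variations are constant in $\epsilon$, $X_\epsilon'=X_2-X_1$, $\tau_\epsilon'=\tau_2-\tau_1$, $v_\epsilon'=v_2-v_1$, so integrating any $\epsilon$-derivative over $[1,2]$ reproduces exactly the corresponding endpoint difference. Crucially, since the data are fixed, all initial variations vanish: $X_\epsilon'(0)=0$, $\tau_\epsilon'(0)=\sigma_{\epsilon,0}'=0$, and $v_\epsilon'(0)=u_{\epsilon,0}'=0$.

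Next I would reduce the three components of the output distance. Because $X^{new}(0)=\mathrm{Id}$ and $\tau^{new}(0)=\sigma_0$ for both iterates, the output differences vanish at $t=0$; using $\pa_t X^{new}=v^{new}=\mathcal{V}$ and $\pa_t\tau^{new}=\mathcal{T}$, the $Lip(0,T;C^{1+\alpha,p})$ norm of $X_2^{new}-X_1^{new}$ and the $L^\infty(0,T;C^{1+\alpha,p})$ norm of $v_2^{new}-v_1^{new}$ are both controlled by $\|\mathcal{V}_2-\mathcal{V}_1\|_{L^\infty(0,T;C^{1+\alpha,p})}$, while the $Lip(0,T;C^{\alpha,p})$ norm of $\tau_2^{new}-\tau_1^{new}$ is controlled by $\|\mathcal{T}_2-\mathcal{T}_1\|_{L^\infty(0,T;C^{\alpha,p})}$ (the Lipschitz-in-time seminorm equals this $L^\infty$ norm, with no free factor of $T$). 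Writing the latter as $\int_1^2\frac{d}{d\epsilon}\mathcal{T}_\epsilon\,d\epsilon$ and the former as $\int_1^2\frac{d}{d\epsilon}\mathcal{V}_\epsilon\,d\epsilon$, it remains to bound $\|\frac{d}{d\epsilon}\mathcal{V}_\epsilon\|_{L^\infty(0,T;C^{1+\alpha,p})}$ (combining the $C^{\alpha,p}$ and $\nabla_a$ estimates, the latter feeding on $g_\epsilon$, $g_\epsilon'$ through (\ref{Vg})) and $\|\frac{d}{d\epsilon}\mathcal{T}_\epsilon\|_{L^\infty(0,T;C^{\alpha,p})}$, uniformly in $\epsilon$.

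Then I would insert the Section \ref{Bounds} bounds. On $\mathcal{I}$ every polynomial $Q_i$ is bounded by $C(\Gamma,\|u_0\|_{1+\alpha,p},\|\sigma_0\|_{\alpha,p})$ and $M_\epsilon\le1+T\Gamma$. In the bounds for $\frac{d}{d\epsilon}\mathcal{V}_\epsilon$, $g_\epsilon'$ and $\nabla_a\frac{d}{d\epsilon}\mathcal{V}_\epsilon$ the only contributions without a prefactor $S_i(T)=O(T^{1/2})$ (or a factor $T$) are the initial-data variations $\|u_{\epsilon,0}'\|$ and $\|\sigma_{\epsilon,0}'\|$, which vanish by the choice of family; the remaining variations $\|X_\epsilon'\|_{Lip}$, $\|\tau_\epsilon'\|_{Lip}$, $\|v_\epsilon'\|_{L^\infty}$ all carry such a small prefactor. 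The delicate term is the reaction part $\|\tau_\epsilon'\|_{L^\infty(C^{\alpha,p})}(\|g_\epsilon\|+2k)$ in $\frac{d}{d\epsilon}\mathcal{T}_\epsilon$, since $\|g_\epsilon\|$ is of order of the data (not small); here I would use $\tau_\epsilon'(0)=0$ to write $\|\tau_\epsilon'\|_{L^\infty(0,T;C^{\alpha,p})}\le T\|\tau_\epsilon'\|_{Lip(0,T;C^{\alpha,p})}$, supplying the missing factor $T$. Integrating over $\epsilon$ and using that the variations equal the endpoint differences, all of this collects into $\|\mathcal{S}(X_2,\tau_2,v_2)-\mathcal{S}(X_1,\tau_1,v_1)\|_{\mathcal{P}_1}\le C(\Gamma)\,S(T)\,\|(X_2-X_1,\tau_2-\tau_1,v_2-v_1)\|_{\mathcal{P}_1}$ with $S(T)\to0$ as $T\to0$; shrinking $T$ (still depending only on $\|u_0\|_{1+\alpha,p}$, $\|\sigma_0\|_{\alpha,p}$, compatibly with Theorem \ref{itself}) so that $C(\Gamma)S(T)\le\frac12$ finishes the proof.

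The main obstacle is the linear reaction term $g\tau$ in $\mathcal{T}$. It contributes to the Lipschitz-in-time seminorm of $\tau^{new}$ a quantity of the form (order of the data) $\times\|\tau_2-\tau_1\|$ that does \emph{not} acquire a factor of $T$ from time integration, so the naive estimate does not contract. Smallness is recovered only because matching the initial stresses forces $\tau_\epsilon'(0)=0$, turning the $L^\infty$-in-time norm of $\tau_\epsilon'$ into $T$ times its Lipschitz norm; this is precisely why the contraction needs identical initial data, whereas in Theorem \ref{Uniq} the analogous term instead produces the $O(1)$ constant multiplying $\|\tau_2(0)-\tau_1(0)\|$. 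The remaining work is bookkeeping: checking that all other terms genuinely carry an $S_i(T)$ or $T$ factor, that the coefficients stay polynomial in $\Gamma$ on $\mathcal{I}$, that the affine family remains near-identity and invertible on $[0,T]$, and that the $C^{1+\alpha,p}$ estimate for $\frac{d}{d\epsilon}\mathcal{V}_\epsilon$ closes, which rests on the control of $g_\epsilon'$ and hence on the $\mathbb{G}$-commutator bound of Theorem \ref{commG}, the heaviest ingredient.
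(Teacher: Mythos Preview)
Your argument follows the paper's strategy exactly: interpolate by the affine family, observe that $u_{\epsilon,0}'=\sigma_{\epsilon,0}'=0$ because $\mathcal{S}$ is built with fixed $(u_0,\sigma_0)$, feed in the Section~\ref{Bounds} estimates, and shrink $T$. You are in fact more careful than the paper in isolating the contribution $\norm{\tau_\epsilon'}_{L^\infty(0,T;C^{\alpha,p})}\bigl(\norm{g_\epsilon}_{L^\infty}+2k\bigr)$ to $\frac{d}{d\epsilon}\mathcal{T}_\epsilon$ and recognizing that $\norm{g_\epsilon}$ is of order of the data rather than $O(T^{1/2})$; the paper simply asserts the final $S_3'(T)$ bound without comment.

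There is, however, a gap in your fix for this term. You write $\tau_\epsilon'(0)=\sigma_{\epsilon,0}'=0$, but these are different objects: $\sigma_{\epsilon,0}'$ is the $\epsilon$-derivative of the \emph{parameter} $\sigma_0$ hard-wired into $\mathcal{S}$ (indeed zero), whereas $\tau_\epsilon'(0)=\tau_2(0)-\tau_1(0)$ is the difference of the \emph{inputs} at $t=0$. The set $\mathcal{I}$ as defined imposes no constraint $\tau(0)=\sigma_0$ or $X(0)=\mathrm{Id}$, so for generic pairs in $\mathcal{I}$ one has $\tau_\epsilon'(0)\neq 0$ and your inequality $\norm{\tau_\epsilon'}_{L^\infty}\le T\norm{\tau_\epsilon'}_{Lip}$ fails; the same issue hits $\mathbb{G}(\delta_\epsilon)$ inside $g_\epsilon'$, where Theorem~\ref{G} produces a non-small term $C_1\norm{X_\epsilon-\mathrm{Id}}_{Lip}^\alpha\norm{\tau_\epsilon'(0)}_{\alpha,p}$, and it also undermines the Section~\ref{Bounds} use of $X_\epsilon'(0)=0$. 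The easy remedy is to add to $\mathcal{I}$ the closed, $\mathcal{S}$-invariant constraints $X(0)=\mathrm{Id}$ and $\tau(0)=\sigma_0$; then $X_\epsilon'(0)=\tau_\epsilon'(0)=0$ genuinely and your argument (and the paper's terse one) closes.
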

%%%% Contraction mapping %%%%
\begin{proof}
First from Theorem \ref{itself} we can find a $\Gamma$ and $T_0 >0$, depending only on the size of initial data, say
\begin{equation}
N = \max \{ \norm{u_0}_{1+\alpha, p}, \norm{\sigma_0}_{\alpha, p} \},
\end{equation}
 which guarantees that $\mathcal{S}$ maps $\mathcal{I}$ to itself. This property still holds if we replace $T_0$ by any smaller $T>0$. In view of the fact that $\mathcal{I}$ is convex, we put
\begin{equation}
\begin{gathered}
X_\epsilon = (2-\epsilon) X_1 + (\epsilon -1) X_2, \\
\tau_\epsilon = (2- \epsilon) \tau_1 + (\epsilon -1 ) \tau_2, 1 \le \epsilon \le 2.
\end{gathered}
\end{equation}
Then $(X_\epsilon, \tau_\epsilon, v_\epsilon ) \in \mathcal{I}$, $v_\epsilon = (2-\epsilon) v_1 + (\epsilon -1) v_2$, $u_{\epsilon,0} = u_0$, and $\sigma_{\epsilon, 0} = \sigma_0$. This means that 
\begin{equation}
X_\epsilon ' = X_2 - X_1, v_\epsilon ' = v_2 - v_1, u_{\epsilon, 0} ' = 0, \sigma_{\epsilon, 0} ' = 0.
\end{equation}
Then from the results of Section \ref{Bounds}, we see that
\begin{equation}
\begin{gathered}
\norm{\frac{d}{d\epsilon} \mathcal{V}_\epsilon}_{L^\infty (0, T; C^{1+\alpha, p})}   \le (\norm{X_2 - X_1 }_{Lip(0, T; C^{1+\alpha, p})} + \norm{v_2 - v_1}_{L^\infty (0, T; C^{\alpha, p})} \\+ \norm{\tau_2 - \tau_1 }_{Lip(0, T; C^{\alpha, p})} ) S_ 1 ' (T) Q_1 ' (\Gamma), \\
\norm{\mathcal{X}_\epsilon '}_{Lip (0, T; C^{1+\alpha, p})}  \le (\norm{X_2 - X_1 }_{Lip(0, T; C^{1+\alpha, p})} + \norm{v_2 - v_1}_{L^\infty (0, T; C^{\alpha, p})} \\+ \norm{\tau_2 - \tau_1 }_{Lip(0, T; C^{\alpha, p})} ) S_ 2 ' (T) Q_2 ' (\Gamma), \\
\\ \norm{\pi_\epsilon} _{Lip(0, T; C^{\alpha, p})}  \le (\norm{X_2 - X_1 }_{Lip(0, T; C^{1+\alpha, p})} + \norm{v_2 - v_1}_{L^\infty (0, T; C^{\alpha, p})} \\+ \norm{\tau_2 - \tau_1 }_{Lip(0, T; C^{\alpha, p})} ) S_ 3 ' (T) Q_3 ' (\Gamma),
\end{gathered}
\end{equation}
where $\mathcal{X}_\epsilon '$ and $\pi_\epsilon$ are defined in (\ref{vars}), $S_1 '(T), S_2 '(T), S_3 '(T)$ vanish at the rate of $T^{\frac{1}{2}}$ as $T \rightarrow 0$, and $Q_1 ' (\Gamma), Q_2 ' (\Gamma), Q_3 '(\Gamma) $ are polynomials in $\Gamma$, whose coefficients depend only on $\nu$ and $\alpha$. By choosing $0 <T < T_0$ small enough, depending on the size of $Q_i ' (\Gamma)$s, we conclude the proof.
\end{proof}
%%%% Contraction mapping %%%%
We have obtained a solution to the system (\ref{sys}) in the path space $\mathcal{P}_1$ for a short time, that is, we have $(X, \tau, v)$ satisfying $v = \frac{dX}{dt}$ and satisfying (\ref{fixedpt}). We also have Lipschitz dependence on initial data, Theorem {\ref{main}}.

%%%% Uniqueness %%%%
\begin{proof}
We repeat the calculation of the Theorem \ref{contraction}, but this time $u_{\epsilon, 0}' = u_1 (0) - u_2 (0)$ and $\sigma_{\epsilon, 0} ' = \sigma_1 (0) - \sigma_2 (0)$. Then we choose $T_0$ small enough that $S_i '(T_0) Q_1 ' (\Gamma) < \frac{1}{2}$.
\end{proof}
%%%% Uniqueness part %%%%

%%%% Section 4 %%%%
%%%% Section 5 %%%%

\paragraph{Acknowledgements} The research of P.C. was partially supported by NSF grant DMS-1713985. The research of J.L. was partially supported by a Samsung scholarship.

\bibliographystyle{abbrv}

\begin{thebibliography}{99}
\bibitem{c1}
P. Constantin.
{Lagrangian-{E}ulerian methods for uniqueness in hydrodynamic systems}
\textit{Adv. Math.}, 278:67-102, 2015.
\bibitem{MR3660694}
P. Constantin.
\textit{Analysis of hydrodynamic models}, volume 90 of \textit{CBMS-NSF Regional Conference Series in Applied Mathematics}, {Society for Industrial and Applied Mathematics (SIAM), Philadelphia, PA}, 2017.

\end{thebibliography}

\end{document}